\begin{document}

\newtheorem{cor}{Corollary}[section]
\newtheorem{theorem}[cor]{Theorem}
\newtheorem{prop}[cor]{Proposition}
\newtheorem{lemma}[cor]{Lemma}
\theoremstyle{definition}
\newtheorem{defi}[cor]{Definition}
\theoremstyle{remark}
\newtheorem{remark}[cor]{Remark}
\newtheorem{example}[cor]{Example}

\newcommand{\wT}{{\widetilde{\mathcal T}}}
\newcommand{\cB}{{\mathcal B}}
\newcommand{\cC}{{\mathcal C}}
\newcommand{\cD}{{\mathcal D}}
\newcommand{\FF}{{\mathcal F}}
\newcommand{\cH}{{\mathcal H}}
\newcommand{\cL}{{\mathcal L}}
\newcommand{\cG}{{\mathcal G}}
\newcommand{\cM}{{\mathcal M}}
\newcommand{\cT}{{\mathcal T}}
\newcommand{\cML}{{\mathcal M\mathcal L}}
\newcommand{\cGH}{{\mathcal G\mathcal H}}
\newcommand{\C}{{\mathbb C}}
\newcommand{\N}{{\mathbb N}}
\newcommand{\R}{{\mathbb R}}
\newcommand{\X}{{\mathbb X}}
\newcommand{\Z}{{\mathbb Z}}
\newcommand{\Kt}{\tilde{K}}
\newcommand{\Mt}{\tilde{M}}
\newcommand{\Mw}{\widetilde{M}}
\newcommand{\Sw}{\widetilde{S}}
\newcommand{\dr}{{\partial}}
\newcommand{\tr}{\mbox{tr}}
\newcommand{\isom}{\mbox{Isom}}
\newcommand{\isomz}{\mbox{Isom}_{0,+}}
\newcommand{\vect}{\mbox{Vect}}
\newcommand{\kappab}{\overline{\kappa}}
\newcommand{\pib}{\overline{\pi}}
\newcommand{\Sigmab}{\overline{\Sigma}}
\newcommand{\gd}{\dot{g}}
\newcommand{\diff}{\mbox{Diff}}
\newcommand{\dev}{\mbox{dev}}
\newcommand{\devb}{\overline{\mbox{dev}}}
\newcommand{\devt}{\tilde{\mbox{dev}}}
\newcommand{\vol}{\mbox{Vol}}
\newcommand{\hess}{\mbox{Hess}}
\newcommand{\db}{\overline{\partial}}
\newcommand{\gammab}{\overline{\gamma}}
\newcommand{\Sigmat}{\tilde{\Sigma}}
\newcommand{\mut}{\tilde{\mu}}
\newcommand{\phit}{\tilde{\phi}}

\newcommand{\D}{\mathbb D}

\newcommand{\cunc}{{\mathcal C}^\infty_c}
\newcommand{\cun}{{\mathcal C}^\infty}
\newcommand{\dd}{d_D}
\newcommand{\dmin}{d_{\mathrm{min}}}
\newcommand{\dmax}{d_{\mathrm{max}}}
\newcommand{\Dom}{\mathrm{Dom}}
\newcommand{\dn}{d_\nabla}
\newcommand{\ded}{\delta_D}
\newcommand{\delmin}{\delta_{\mathrm{min}}}
\newcommand{\delmax}{\delta_{\mathrm{max}}}
\newcommand{\hmin}{H_{\mathrm{min}}}
\newcommand{\maxi}{\mathrm{max}}
\newcommand{\oL}{\overline{L}}
\newcommand{\oP}{{\overline{P}}}
\newcommand{\Ran}{\mathrm{Ran}}
\newcommand{\tgamma}{\tilde{\gamma}}
\newcommand{\cotan}{\mbox{cotan}}
\newcommand{\lambdat}{\tilde\lambda}
\newcommand{\St}{\tilde S}

\newcommand{\II}{I\hspace{-0.1cm}I}
\newcommand{\III}{I\hspace{-0.1cm}I\hspace{-0.1cm}I}
\newcommand{\HSt}{\tilde{\operatorname{HS}}}
\newcommand{\note}[1]{\marginpar{\tiny #1}}

\newcommand{\op}{\operatorname}

\newcommand{\AdS}{\operatorname{AdS}}
\newcommand{\uAdS}{\widetilde{\operatorname{AdS}}}
\newcommand{\dS}{\operatorname{dS}}
\newcommand{\HH}{\mathbb H}
\newcommand{\PP}{\mathbb P}
\newcommand{\RR}{\mathbb R}
\newcommand{\uRP}{\widetilde{\R\PP}^1}
\newcommand{\HS}{\operatorname{HS}}
\newcommand{\SO}{\operatorname{SO}}
\newcommand{\cF}{\operatorname{\mathcal{F}}}
\newcommand{\kD}{\mathfrak{D}}

\title{Particles with spin in stationary flat spacetimes}

\author[]{Thierry Barbot}
\address{Laboratoire d'analyse non lin\'eaire et g\'eom\'etrie\\
Universit\'e d'Avignon et des pays de Vaucluse\\
33, rue Louis Pasteur\\
F-84 018 AVIGNON
}
\email{thierry.barbot@univ-avignon.fr}
\author[]{Catherine Meusburger}
\address{Department Mathematik, FAU Erlangen-N\"urnberg\\Bismarckstrasse 1 1/2\\91054 Erlangen,
Germany}
\email{catherine.meusburger@math.uni-erlangen.de}
\thanks{T. B. was partially supported by CNRS, ANR GEODYCOS and A.N.R. program "Extensions of Teichmueller-Thurston theories (ETTT)", ANR-09-BLAN-0116-01. C.M.'s work is supported by the Emmy-Noether fellowship ME 3425/1-1 of  the German Research Foundation (DFG). Research visits during which work on this project was undertaken were also supported by the Emmy-Noether fellowship ME 3425/1-1.}

\keywords{}
\subjclass{83C80 (83C57), 57S25}
\date{\today}

\begin{abstract}
We construct stationary flat three-dimensional Lorentzian manifolds with singularities that  are obtained from Euclidean surfaces with cone singularities and closed one-forms on these surfaces. In the application to (2+1)-gravity, these spacetimes correspond to models containing massive particles with spin. We analyse their geometrical properties, introduce a generalised notion of global hyperbolicity and classify all  stationary flat spacetimes with singularities that are globally hyperbolic in that sense. We then apply our results to (2+1)-gravity and analyse the causality structure of these spacetimes in terms of measurements by observers. In particular, we derive a condition on observers that  excludes causality violating light signals despite the presence of closed timelike curves in these spacetimes.
\end{abstract}

\maketitle




\section{Introduction}

Flat three-dimensional Lorentzian manifolds with conical singularities were first introduced in the physics literature on (2+1)-dimensional gravity, where they model  (2+1)-dimensional spacetimes  that contain massive point particles with spin.

The first models of   (2+1)-gravity with particles were derived in \cite{starus} and  \cite{djth}.  Their physical  properties and their quantisation were studied in the subsequent publications \cite{deser_scat, carlipscat, sg, thooft2, thooft3, thooft1}, which led to a large body of work on the classical aspects and quantisation of the models,  for an overview see \cite{carlip}.

As they are models that include matter and still are amenable to quantisation, these models play an important role in the research subject of quantum gravity. Since they allow one to investigate the quantisation of gravity coupled to matter,  they have been studied extensively in the physics literature.  Another reason why these models are of interest  in quantum gravity is their causality structure. It was shown in \cite{djth} that the presence of massive point particles with spin leads to  the presence of closed timelike curves in these models which, however, can be removed by excising a small cylinder around each particle.
Additionally, closed timelike curves can be generated dynamically when two spinless massive particles  approach each other with  sufficiently high speed (``Gott pairs''). A detailed investigation of this phenomenon has been given in \cite{gott,deser_stringtlike}, with the conclusion that these dynamically generated closed, timelike curves are not physically meaningful since they are present only for very short times, and, in particular,  ``time machines'' are excluded \cite{desjack_ttravel,deser_tttravel}.

Despite their relevance, many geometrical  properties of the models including massive point particles with spin are not fully understood even on the classical level. Although their properties have been investigated in the physics literature, they are very few results concerning the underlying mathematical structures.
The closest  treatment in the mathematics literature is the study of geometric Riemannian manifolds, mostly in the case of euclidean, spherical
or hyperbolic surfaces with conical singularities (\cite{troyanov3}, \cite{masur, matab})  sometimes in relation to the work on billiards,
and in the 3-dimensional case the work devoted to the Orbifold Theorem (\cite{cooper, porti}).

A similar treatment in the Lorentzian case is still at the beginning.
This includes in particular the causality issues arising in these models as well as the lack of systematic definitions and classification.  A systematic investigation of the mathematical features of three-dimensional Lorentzian spacetimes with particles has been initiated only recently and is mainly concerned with the case of constant negative curvature \cite{bb_hand,cone,corepart,  krasnovpart, collisions}.

In this article, we investigate flat stationary Lorentzian spacetimes with a general number of massive particles with spin. We  construct examples of  stationary flat Lorentzian spacetimes with particles that are based on  Euclidean surfaces with cone singularities and  closed one-forms on these surfaces. We introduce a generalised notion of global hyperbolicity that can be applied to these models despite the fact that they contain closed timelike curves. Based on this notion of global hyperbolicity, we classify the flat stationary globally hyperbolic Lorentzian
spacetimes with particles and give a detailed analysis of their geometrical properties.

The last section of the article is dedicated to a problem that is of high relevance to physics,
 namely the question,  how the presence of particles manifests itself in measurements by observers that probe the geometry of the spacetime by exchanging ``test lightrays''\footnote{The name ``test light rays'' is motivated by the fact that they play a role similar to the test masses used in general relativity.  They are lightlike geodesics in a given spacetime rather than actual lightlike point sources, and we neglect their impact on the stress energy tensor. This is different from the treatment in \cite{deser_lightlikesources}, which considers solutions of the Einstein equations in (2+1) dimensions with lightlike point sources.}. This idea is very natural from the viewpoint of general relativity, whose physical interpretation was formulated in terms of lightrays exchanged by observers from the beginning. It is also of special relevance to quantum gravity in four dimensions, as it is hoped that quantum gravity effects might manifest itself in cosmic microwave background radiation and thus be determined by means of lightrays. The (2+1)-dimensional models considered in this work share many properties with the cosmological models investigated in (3+1) dimensions.

 We show that light signals exchanged by observers
 correspond naturally to piecewise geodesic curves on the underlying Euclidean surfaces with cone singularities. We demonstrate how an observer can construct the relevant parameters that describe the spacetime from such measurements: the positions, masses and spins of such particles as well as their velocity with the respect to the observer.

 Building on these results, we investigate the causality issues associated with closed, timelike curves in spacetimes containing particles with spin.
  In particular, this allows one to establish a condition on the observers that excludes paradoxical signals, i.~e.~signals that are received before they are emitted. In physical terms, this implies that observers that stay away a sufficient distance from each particle, will not experience paradoxical light signals, even if the light signals themselves  enter a region around the particle which contains closed, timelike curves.

Note that these models do not involve the dynamically generated closed, timelike curves that are investigated in \cite{gott,deser_stringtlike,desjack_ttravel,deser_tttravel}, since the spacetimes under consideration are stationary. Instead, the spacetimes  investigated in this paper contain closed timelike curves  that are due to the presence of massive point particles with spin. Rather than investigating the impact of dynamically generated closed timelike curves and determining the spacetime regions in which they occur, we focus on
the impact of these closed timelike curves on observers at a sufficient distance from the particles and on light signals exchanged by such observers.

\section{The model: a single particle with spin} \label{singlepart}


\subsection{Definition}\label{defsec}

In the following we denote by $\R^{1,2}$ the three-dimensional Minkowski space and by
 $\Delta$ a timelike geodesic in $\R^{1,2}$.  We choose a suitable coordinate system $(x, y, t)$, in which  the Minkowski norm takes the form  $ds^2=dx^2+dy^2-dt^2$
 and $\Delta$ is given by the equation $x=y=0$.  We also introduce spatial polar coordinates $(r, \theta)$, $r\in\mathbb R^+, \theta\in[0,2\pi)$ which are given in terms of the spatial coordinates $x,y$ by $x=r\cos\theta$, $y=r\sin\theta$.

The model for a single particle introduced in \cite{djth} depends on two real parameters,
an angle  $0<\theta_0 < 2\pi$, in the following referred to as {\em deficit} or {\em apex angle}, and a  parameter $\sigma\in\RR$, in the following referred to as {\em spin}. The names for these parameters are motivated by their physical interpretation. It is shown in \cite{djth}, see also \cite{carlip},  that the metric for a single point particle in $\R^{2,1}$ is that of a cone with a deficit angle $\theta_0=2\pi-\mu$, where $\mu\in[0,2\pi)$ is the mass of the particle in units of the Planck mass. Moreover, it is shown there that the resulting spacetime has a non-trivial asymptotic angular momentum that is given by  $\sigma\in\mathbb R$. The parameter $\sigma$ which therefore is viewed as an internal angular momentum or spin of the particle in units of $\hbar$ \cite{djth}.

The associated flat Lorentzian manifold is constructed as follows. The equations $\theta=0$, $\theta=\theta_0$ define two timelike half-planes, which both have the timelike geodesic $\Delta$ as their boundary and which we denote,
respectively, $P_0$, $P_1$. These half-planes bound the region \begin{align}\label{wedge}W_{\theta_0} := \{ r>0, 0 \leq \theta \leq \theta_0 \},\end{align}
that we call a wedge of angle $\theta_0$. We glue $P_0$ to $P_1$ by the map $(r, 0, t) \mapsto (r, \theta_0, t+\sigma)$, which is a restriction of the
 elliptic isometry
$ g:(r, \theta, t) \mapsto (r, \theta+\theta_0, t+\sigma)$. The gluing of the wedge is pictured in Figure  \ref{wedge}.  The result is a manifold $M'_{\theta_0, \sigma}$,
that is naturally equipped with a flat Lorentz metric and homeomorphic to $\R^3$ minus a line. Note that the time orientation of
 Minkowski space induces a time orientation on $M'_{\theta_0, \sigma}$, namely the one for which the coordinate $t$ increases along
future oriented causal curves.
 \begin{figure}
  \centering
  \includegraphics[scale=0.4]{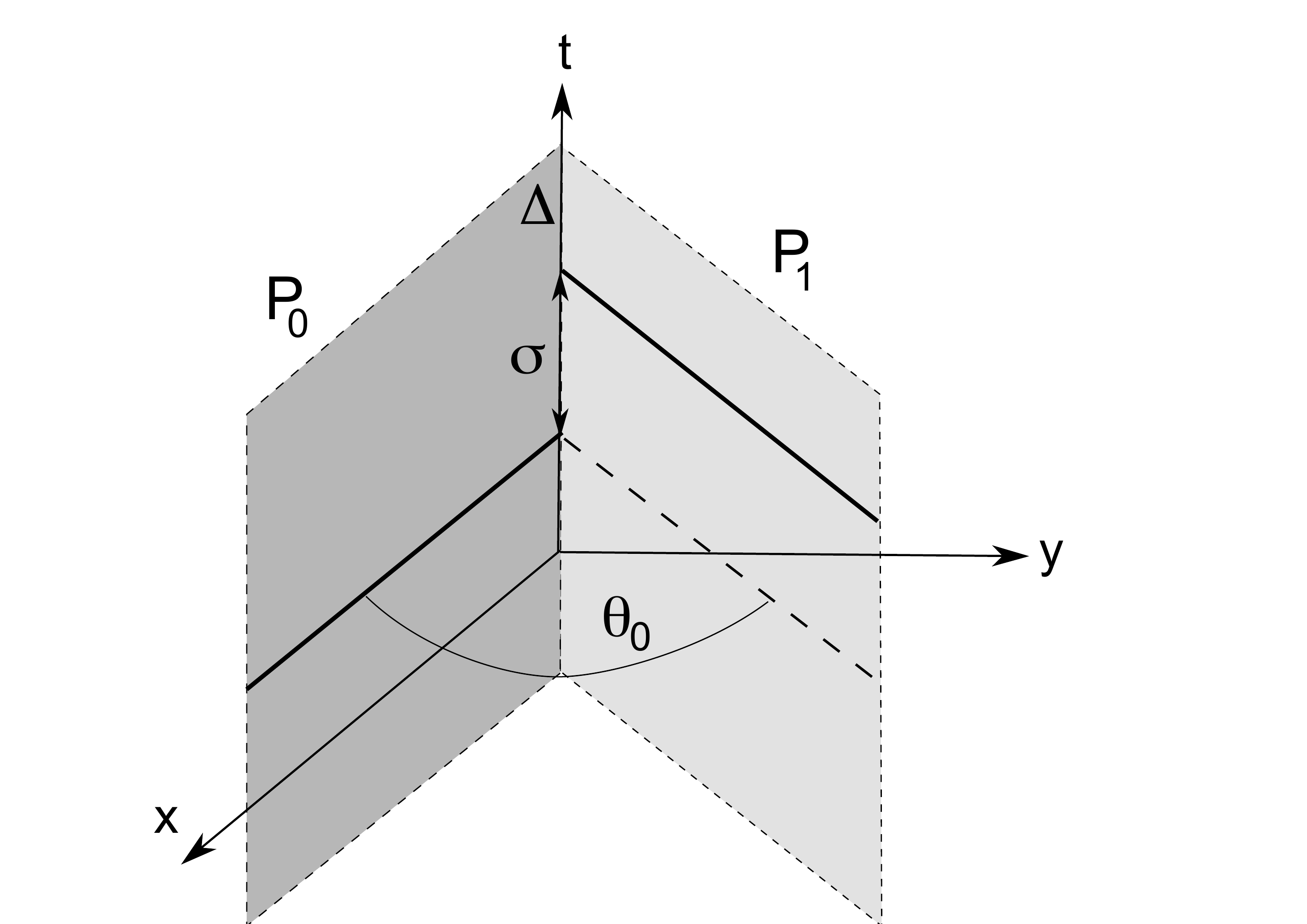}
\centering  \caption{Construction of the single particle-model by gluing a wedge $W_{\theta_0}$. The two solid  lines are identified by the elliptic isometry $(r, \theta, t) \mapsto (r, \theta+\theta_0, t+\sigma)$ that maps $P_0$ to $P_1$.}
  \label{wedge}
\end{figure}

We  now  demonstrate  how a (singular)   line can be added  to $M'_{\theta_0, \sigma}$ to obtain a manifold homeomorphic to $\R^3$. For this,
one is tempted to extend the gluing defined above to the closed wedge
$\overline{W}_{\theta_0}  := \{ r\geq0, 0 \leq \theta \leq \theta_0 \}$ in such a way that points  of the form $(0,0,t)$  are identified with
 $(0,0, t+\sigma)$. This is possible if  $\sigma=0$ and in that case yields a singular flat spacetime $M_{\theta_0, 0}$ that contains
a singular line characterised by the condition $r=0$. It corresponds to a (2+1)-dimensional spacetime with a single particle of mass $\mu=2\pi-\theta_0$ and vanishing spin $\sigma=0$.

However, this procedure does not work in the  case $\sigma\neq0$. For non-vanishing spin $\sigma$, the quotient of $\Delta$ by this isometry is a circle and not a line. When equipped with the quotient topology, it is no longer a manifold. Indeed, an open  disc in the $xy$-plane that is centred at the point $(0,0,0)$  corresponds to a
 union $\bigcup_{n=0}^\infty D_n$ of infinitely many circular sectors $D_n=\{ (r, \theta, t) \vert r<R_0, 0\leq \theta \leq \theta_0, t=n\sigma , n\in\Z\}$
that are identified along the line segments given by $\theta=0$ and $\theta=\theta_0$.

A more transparent description of spacetimes containing particles with non-vanishing spin is obtained by introducing a new set of coordinates  $(r, \alpha, \tau)$ that includes the  radial coordinate $r$ as well as
$$\tau=\theta_0t-\sigma\theta\qquad
\alpha=\frac{2\pi}{\theta_0}\theta.$$
As the coordinate $\alpha$  has the range  $0\leq \alpha<2\pi$, it induces
 a map
$\varphi: \R^3 \setminus \{r=0\} \rightarrow M'_{\theta_0, \sigma}$.
The pull-back by $\varphi$ of the flat metric on $M'_{\theta_0, \sigma}$ to $\mathbb R^3\setminus\{r=0\}$ is given by
\begin{align}\label{pback}
ds^2=-\frac1{\theta_0^2}d\tau^2 - \frac{\sigma}{\pi\theta_0}d\alpha d\tau + \frac{r^2\theta_0^2 - \sigma^2}{4\pi^2}d\alpha^2 + dr^2.\end{align}
In the following we denote by $M_{\theta_0, \sigma}$ the manifold $\R^3$ equipped with this metric  outside
the singular line given by $r=0$. 
It  contains (an isometric copy of)
$M'_{\theta_0, \sigma}$.
Note that this formula can be extended  to the case $\theta_0 \geq 2\pi$. In geometrical terms, this amounts to the following construction.
We consider the wedge $W_{\theta_0}$ not as embedded in  Minkowski space, but as embedded in the  universal cover of $\R^{1,2} \setminus \Delta$. In other words,
we introduce a  coordinate system $(r, \theta, t)$, where  $\theta$ is no longer defined modulo $2\pi$ but now parametrises
the entire real line. The resulting flat singular spacetime $M_{\theta_0, \sigma}$  is then given as a $n$-branched cover along
the singular line over $M_{\theta_0/n, \sigma}$, where $n$ is chosen so that $\theta_0/n$ is less than $2\pi$.
In this description,  the mass parameter $\mu=2\pi - \theta_0$ can become negative or vanish. In particular,  the limit case $\theta_0=2\pi$ yields a massless particle  with non-vanishing  spin $\sigma$.

\subsection{Closed timelike curves (CTCs) and the CTC surface}\label{ctcsec}

In contrast to the coordinate $t$, the coordinate $\tau$ on $M_{\theta_0, \sigma}$ is not a time function. Introducing the variable  $r_0=\sigma/\theta_0$, we can rewrite the metric \eqref{pback}
as
\begin{align}\label{metric2} ds^2=-\frac1{\theta_0^2}\,d\tau^2 - \frac{r_0}{\pi}\,d\alpha d\tau + \left(\frac{\theta_0}{2\pi}\right)^2\!\!(r^2-r_0^2)\,d\alpha^2 + dr^2\end{align}
For a given value of $\tau$, the circle $C_{\tau,r}=\{(r,\alpha,\tau)\,|\, 0\leq \alpha\leq 2\pi\}$ of constant radius $r$ is spacelike if $r>\vert r_0 \vert$, timelike if $r<\vert r_0 \vert$, and null  for $r=\vert r_0 \vert$. This implies in particular that it defines a closed timelike curve  (CTC) for $r<|r_0|$ and a closed lightlike curve for $r=|r_0|$. In the following we will therefore refer to $|r_0|$ as the \textit{CTC radius}, to the surface $H=\{(|r_0|,\alpha, \tau)\,|\, 0\leq \alpha\leq 2\pi, \tau\in\R\}$ of constant radius $|r_0|$ as the \textit{CTC surface}.  We call the domain $U=\{(r,\alpha,\tau)\,|\, 0<r<|r_0|, 0\leq \alpha\leq 2\pi, \tau\in\R\}$ the \textit{CTC region} and
the region $M^*_{\theta_0,\sigma}=\{(r,\alpha,\tau)\,|\, r>|r_0|,  0\leq \alpha\leq 2\pi, \tau\in\R\}$ the \textit{interior region} of the spacetime. The latter is a manifold with boundary,
whose boundary is the CTC surface $H$. It is the complement of the CTC region, which is diffeomorphic to  $\D^2 \times \R$, where $\D^2$ denotes the open disc in $\R^2$.
On the CTC surface the metric  \eqref{metric2} degenerates to $$ds^2|_{H}=-\frac1{\theta_0^2}\,d\tau^2 - \frac{r_0}{\pi}\, d\alpha d\tau = -\left(\frac1{\theta_0^2}\,d\tau + \frac{r_0}{\pi}\,d\alpha\right)\, d\tau.$$ Note  that $d\tau$
does not vanish along spacelike curves in $H$. It follows that a non-timelike curve in $H$ cannot close up unless
 it is contained in a circle in $H$ characterised by the condition  $\tau=$ constant. Such circles are
lightlike but they are not geodesics. In the following we will call them  \textit{null circles} on the CTC surface.  Note  that the future of a point $x=(\vert r_0 \vert, \alpha, \tau_0)$ in the CTC surface $H$, i.~e.~ the points in $H$ that can be connected to $x$ via a future directed timelike curves in $H$,  is the region $x_+=\{ (r_0,\alpha, \tau)\,|\, \tau > \tau_0 \}\subset H$ above the null circle containing $x$. The future in $H$ of a point on a given null circle  therefore coincides with the future in $H$ of this null circle.

The CTC region $U$ contains many closed timelike curves (CTCs). Note, however, that it does not contain closed timelike geodesics. It follows from the expression for the metric, that in order to close up,   timelike curves  must have an acceleration, which is related to the spin parameter $\sigma$. The smaller  the value of the spin parameter, the bigger the acceleration associated with CTCs must be, and it tends to infinity in the limit of vanishing spin.
Due to the presence of CTCs, the CTC region $U$ exhibits quite pathological causality relations. The future (or the past) inside $U$ of any point in $U$ is the entire CTC region. Its future
 (or past) in $M'_{\theta_0, \sigma}$ is the entire manifold  $M'_{\theta_0, \sigma}$.

In contrast to the CTC region, the causality structure of the interior region $M^*_{\theta_0, \sigma}$ is well-behaved. As the  coordinate $\tau$ defines a time function on $M^*_{\theta_0, \sigma}$, $M^*_{\theta_0, \sigma}$ contains no CTCs.  Of course, this does not exclude that a timelike curve starts in the interior region,
enters the CTC region and then returns to its starting point in the  interior region.  However, the absence of CTCs in the interior region implies that any closed timelike curve with a starting point in the interior region $M'_{\theta_0, \sigma}$ must enter the CTC region.

\subsection{Killing vector fields}\label{killvecsec}
The group of time orientation and orientation preserving isometries of $M'_{\theta_0, \sigma}$ is an abelian group of dimension two. It is generated by rotations  $(r,\alpha,\tau)\mapsto (r,\alpha+\alpha_0,\tau)$ and by translations   $(r,\alpha,\tau)\mapsto(r,\alpha, \tau+\tau_0)$. In particular,
$M'_{\theta_0, \sigma}$ is stationary: the translation along $\tau$ induces an isometry between the level sets of
$\tau$. However, if the spin $\sigma$ is nonzero, $M_{\theta_0, \sigma}$ is not static because the lapse term $ - \frac{r_0}{\pi}d\alpha d\tau$ in \eqref{metric2}
does not vanish.

The CTC region, CTC surface and the interior region are distinguished by the Killing vector $\partial_\alpha$ associated with the rotations. The CTC region is characterised by the condition that $\partial_\alpha$ is timelike, the CTC surface  is the locus where
$\partial_\alpha$ is lightlike, and the interior region is the region where $\partial_\alpha$ is spacelike.

\subsection{Cauchy surfaces}\label{csurfacesec}

As the CTC region around the particles contains closed timelike curves, $M'_{\theta_0, \sigma}$ is far from being globally hyperbolic.
However,  the level surfaces of the coordinate $\tau$ are Cauchy surfaces for the interior region in the sense that any
inextendible causal curve in $M'_{\theta_0, \sigma}$ that is \textit{contained in $M^*_{\theta_0, \sigma}$} must intersect
every level set of $\tau$. We express this property by saying that \textit{$M^*_{\theta_0, \sigma}$ is globally hyperbolic
relatively to its boundary}.
As observed in Section \ref{ctcsec},   the only non-timelike loops in the CTC surface $H=\partial M^*_{\theta_0, \sigma} $ are the null circles.
The boundary of any Cauchy surface in the interior region  $M^*_{\theta_0, \sigma}$ therefore must coincide with one of  these null circles.

\subsection{The developing map}\label{devmap}
The universal covering  $\widetilde{M}'_{\theta_0, \sigma}$ of  $M'_{\theta_0,\sigma}$ is homeomorphic to the manifold obtained by
by taking an infinite number of copies $W^i_{\theta_0},$ $i \in \Z$, of the wedge $W_{\theta_0}$ introduced in Section \ref{defsec} and gluing them along the associated planes $P_0^i$, $P_1^i$ via the the elliptic isometry $g: (r, \theta, t) \mapsto (r, \theta+\theta_0, t+\sigma)$:
$
P_1^i\sim P_0^{i+1}
$ for all $i\in\Z$.
The covering map
$p: \widetilde{M}'_{\theta_0, \sigma} \to M'_{\theta_0, \sigma}$ is the map induced by
  the isometry $W^i_{\theta_0} \cong W_{\theta_0}$.
Denote by $g^i$ the elliptic isometry obtained by applying the elliptic isometry $g$ 
$i$ times:  $g^i: (r, \theta, t) \mapsto (r, \theta+i \theta_0, t+i \sigma)$. Then the
 maps $W^i_{\theta_0} \to g^i(W_{\theta_0})$ together define a (local) isometry
$\cD: \widetilde{M}'_{\theta_0, \sigma} \to \R^{1,2} \setminus \Delta$. This map is the developing map of the Minkowski structure
on $M'_{\theta_0, \sigma}$. It is equivariant with respect to the natural actions of $\pi_1(M'_{\theta_0, \sigma}) \approx \Z$ on $\widetilde{M}'_{\theta_0, \sigma}$
and on $\R^{1,2} \setminus \Delta$.
The first action is the one that maps every $W^i_{\theta_0}$ onto $W^{i+1}_{\theta_0},$ and the second action is the one induced by $g$.

Note that the map $\cD$ is \textit{never} a homeomorphism. When $|i|$ increases, the wedges $p(W^i_{\theta_0}) = g^i(W_{\theta_0})$ wrap around the line
$\Delta$, and for  $|i|> 2\pi/\theta_0$ overlap with the initial wedge $W_{\theta_0}$. This overlapping is a perfect matching if and only if $2\pi/\theta_0$ is rational,
in which case $M'_{\theta_0, \sigma}$ might be seen as a finite quotient of $ \R^{1,2} \setminus \Delta$.  This reflects a general pattern that is also present in the case of Minkowski spacetimes with multiple particles. The developing maps for these spacetimes
 are not one-to-one. Moreover,
as we will see in the following, the developing maps of spacetimes with at least two particles  are surjective.  The developing maps are thus quite pathological, which reflects the fact that the regular part of these manifolds {\em cannot} be obtained as a
quotient of a region of the Minkowski space.

\subsection{Geodesics}
\label{sub:geodesic}

 \begin{figure}
  \includegraphics[scale=0.4]{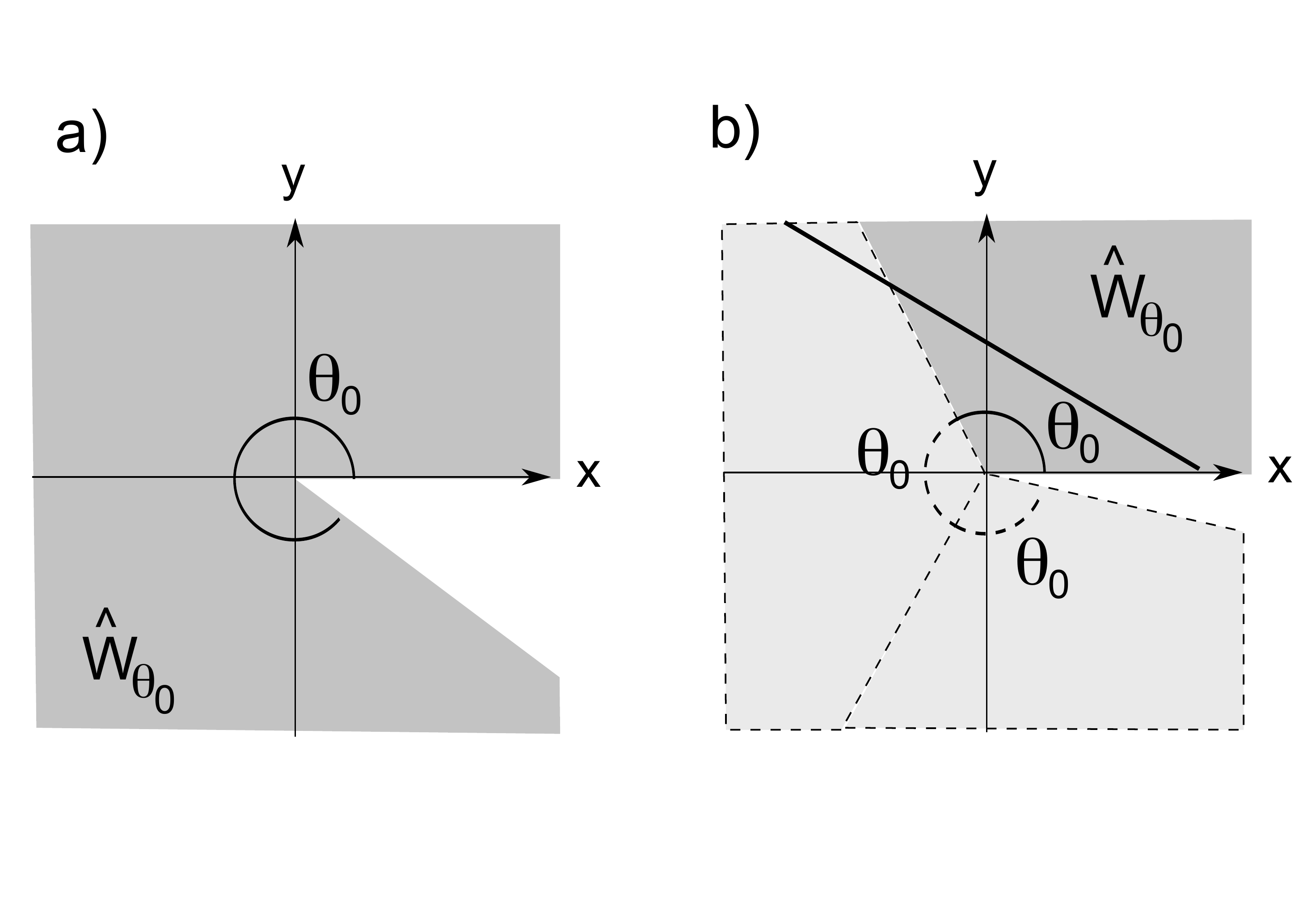}
\centering  \caption{The defining wedge of the Euclidean plane with a cone singularity for a) $\theta_0>\pi$ and b) $\pi/2< \theta_0<\pi$. The solid line in b) corresponds to a geodesic loop in $\R^2_{\theta_0}$ with winding number 1.}
  \label{devplane}
\end{figure}

To investigate the properties of the geodesics in $M'_{\theta_0,\sigma}$, it is useful to introduce the {\em Euclidean plane with a cone singularity} of cone angle $\theta_0$, which in the following will be denoted by  $\R^2_{\theta_0}$.
The definition is analogous to the one of the manifold $M_{\theta_0,\sigma}$.
Consider the wedge of angle $\theta_0$ in the Euclidean plane $\R^2\setminus\{0\}$:
$
\hat{W}_{\theta_0}=\{ (r,\theta)\,|\, r>0, 0\leq \theta\leq \theta_0\}
$
and glue the two sides of this wedge via the identification $(r,0)\sim (r,\theta_0)$. Alternatively, the Euclidean plane with a cone singularity is obtained as the completion of
the following metric on $\R^2\setminus \{0\}$  given in polar coordinates
\begin{align}\label{planemetric}
ds_E^2=\left(\frac{\theta_0}{2\pi}\right)^2r^2d\theta^2 + dr^2.
\end{align}
The vertical projection $p_0: \R^{1,2} \setminus \Delta \to \R^2 \setminus \{0\}$ then induces a map $M_{\theta_0, \sigma} \to \R^2_{\theta_0}$.
Denote by $\pi_0: \widetilde{M}'_{\theta_0, \sigma} \to \R^2_{\theta_0}$, $\pi_0=p_0\circ \cD$ the composition of this projection with the developing map.
Let now $c: (a,b) \to M'_{\theta_0, \sigma}$ a geodesic path (timelike, lightlike or spacelike). Then $c$ lifts to a geodesic path $\tilde{c}: (a,b) \to \widetilde{M}'_{\theta_0, \sigma}$.
As the developing map  $\cD$ is a local isometry, the image $\bar{c}:=\cD \circ \tilde{c}$ is  a geodesic path in $\R^{1,2} \setminus \Delta$ and its projection $p_0\circ \bar c=\pi_0\circ \tilde c$
 is a geodesic path in $\R^2_{\theta_0}$. Note that this path is constant if and only if  the geodesic $\bar c$ is parallel to $\Delta$.

 The path $\pi_0\circ \tilde c$ is a geodesic loop in $\R^2_{\theta_0}$  if and only if there exists a timelike geodesic $\Delta'$ parallel to $\Delta$ in $\R^{1,2}\setminus\Delta$ such that both its starting and endpoint of $\bar c$ lie on $\Delta'$.
 As we will see in the following, a lightlike geodesic $\bar c$ with this property corresponds to a returning lightray, i.~e.~a lightray sent out by an observer with worldline $\Delta'$  that returns to the observer at a later time.
This allows us to conclude that for $\theta_0\geq 2\pi$ there can be no returning lightray because $\R^2_{\theta_0}$ does not contain geodesic loops. Any geodesic in $\R^2_{\theta_0}$ lifts to a straight line in the Euclidean wedge $W_{\theta_0}\subset \R^2\setminus\{0\}$. If the angle $\theta_0$ is greater or equal to $\pi$, a straight line cannot intersect both sides of the wedge and hence cannot close. More generally, using the developing map for $\R^2_{\theta}$ and its identification with rotations in $\R^2\setminus\{0\}$ as shown in Figure \ref{devplane}
,  one finds that the existence  of a geodesic loop in $\R^2_{\theta_0}$
 with winding number $k$ around the cone singularity implies
$k\theta_0 < \pi.$

\subsection{CTC cylinders}
In the following section, we will extend our model obtain a more general notion of flat Lorentzian spacetimes  with a  particles. For this we will need to consider the interior region as a manifold with boundary that is given by the CTC surface. We introduce the following definition.

\begin{defi}
Let $h$ be a positive real number and $a\in\R$. A {\em CTC cylinder} of height $h$ based at $a$ is the region in $U$ between the two level sets $\tau^{-1}(a)$, $\tau^{-1}(a+h)$ of $\tau$.
The past (future) complete CTC cylinder based at $a$ is the past (future) in $U$ of the level set $\tau^{-1}(a)$.
\end{defi}

Note that all CTC cylinders for a given value of $h$ are isometric.  In contrast to the quantity $h$, the parameter $a$ therefore has no intrinsic geometrical meaning.
Similarly, all past  and future complete cylinders are isometric to the entire CTC region, which implies in particular that they
are \textit{complete}.

 \begin{figure}
  \centering
  \includegraphics[scale=0.4]{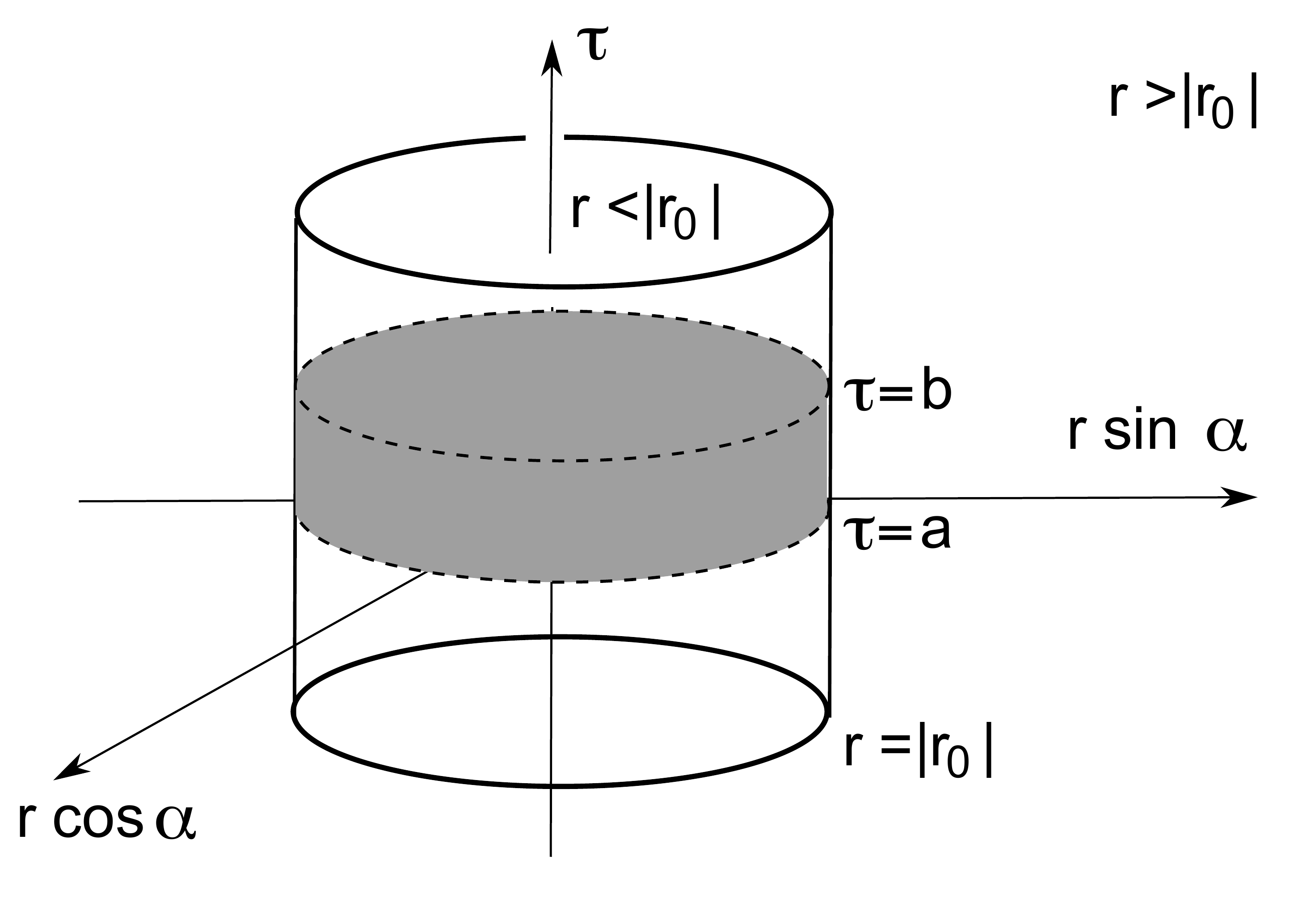}
\centering  \caption{CTC cylinder of height $h=b-a$ between the level sets $\tau^{-1}(a)$ and $\tau^{-1}(b)$ (shaded region).}
  \label{CTC}
\end{figure}

\section{Global hyperbolicity}

\subsection{Definition}

We are now ready to give a general definition of flat Lorentzian manifolds with particles
and to define a modified notion of global hyperbolicity, which will allow us to restrict the class of Lorentzian manifolds with particles under consideration.

 \begin{defi}\label{mfdef}
A flat Lorentzian manifold with particles is a  three-dimensional manifold $M$ with an embedded closed
$1$-submanifold $\Delta$ (not necessarily connected), such that $M \setminus \Delta$ is endowed with a flat Lorentzian metric and for every
$x$ in $\Delta$ there exists a neighbourhood $U$ of $x$ in $M$ such that $U \setminus \Delta$ is isometric to the neighbourhood
of a point on the singular line (the particle) in $M_{\theta_x, \sigma_x}$ with the singular line itself removed\footnote{Observe that the map $x \mapsto (\theta_x, \sigma_x)$ is then necessarily
locally constant on $\Delta$.}.
\end{defi}


This definition provides us with a very general notion of a flat Lorentzian spacetime with particles  and thus potentially with a large class of examples.  However, there is no hope of obtaining a global understanding of flat spacetimes with particles without  suitable additional hypotheses. In Riemannian geometry, it is customary to impose as such an additional hypothesis the
 compactness of the ambient manifold. However, this condition is not suited to the Lorentzian context, since it implies issues with the causal structure such which are undesirable from both the mathematics and the physics point of view. Such issues arise even in the much simpler situation of flat Lorentzian manifolds without particles
 (cf. \cite{galloway, sanchezcompact}).

Instead, the standard condition imposed in Lorentzian geometry is the requirement of  global hyperbolicity. This implies the existence of a Cauchy surface, and an especially favourable situation is the case where the Cauchy surfaces are compact.
This is the point of view we will adopt in the following. However, the fact that the manifolds under consideration exhibit closed timelike curves in the CTC region requires  that we modify our concept of global hyperbolicity in a suitable way.
The central idea is
to consider the flat Lorentzian manifold as a surface with boundaries that are given by the CTC surfaces associated to particles. The appropriate notion of a  Cauchy surface is that of a spacelike surface with lightlike boundaries, the latter corresponding to its intersection with the CTC surfaces.

\begin{defi}
\label{def:spinGH}
A  {\em globally hyperbolic} flat Lorentzian spacetime $M$ with $n$  particles
is a flat Lorentzian manifold with particles $(M,\Delta)$ such that
$\Delta = \{ d_1, .... , d_n\}$ is the disjoint union of $n$ lines $d_1,...,d_n$
and there exist
disjoint neighbourhoods $V_1$, ... , $V_n$ of the singular lines $d_1$, ... , $d_n$ such  that:
\begin{enumerate}
\item each neighbourhood $V_i$ is isometric to
 a CTC cylinder of height $h_i$ in $M_{\theta_i, \sigma_i}$.

\item the complement $M^*$ of the disjoint union $\bigcup_{i=1}^nV_i$ is a flat Lorentzian manifold with boundary that admits a
Cauchy surface, i.~e.~an embedded surface with boundary $S$ with spacelike interior, such that the boundary components of $S$ are null circles in the CTC surfaces
$\partial V_i$,  and such that every inextendible causal curve in $M \setminus \Delta$ that is contained in $M^*$ intersects $S$.
\end{enumerate}
If moreover the Cauchy surface $S$ can be selected to be compact, then $M$ is called {\em spatially compact}.
\end{defi}

Note that this definition is quite restrictive regarding the CTC region around the particles.
This is due to the following reasons.  Firstly, we want the particles to be hidden behind a ``CTC surface'' $\partial V_i$, and the CTC regions $V_i$  around each particle therefore must be sufficiently
big so that they reach
the CTC surfaces in the associated one-particle models. Secondly, we need the interior region to be globally hyperbolic and hence foliated
by Cauchy surfaces. This induces a foliation of the CTC surfaces $\partial V_i\subset M$ around each particle by non-timelike closed curves, and hence
by null circles. In order to obtain a notion of globally hyperbolic flat Lorentzian  manifold with
particles that fulfils each of these requirements,   we then have to assume that each surface $\partial V_i$ is the boundary of a CTC cylinder. 

Given a flat Lorentzian spacetime with particles that is globally hyperbolic in the sense of Definition \ref{def:spinGH}, one can add to each CTC cylinder the
entire CTC region in the corresponding one-particle model, and this completion has no impact on the geometry of its interior part $M^*$. However, in the following, we take the viewpoint that the specific geometry of the CTC region is irrelevant itself and only of interest through its effect on geodesics that enter a connected component  $V_i$ of the  CTC region from the interior region  $M^*$ and then return to  $M^*$.  Such a geodesic has to be contained in the CTC cylinder bounded by $\partial V_i$. What happens outside the CTC cylinder inside the CTC region is
therefore not relevant to our situation except through its effects on geodesics outside the CTC region.

In the following we will focus on the situation in which the spins $\sigma_i$
are small compared to the cone angles $\theta_i$ so that the scale of the CTC radii $|r_i| = \frac{\vert \sigma_i\vert}{\theta_i}$ is small
 compared with the global geometry of the more classical globally hyperbolic interior region $M^*$. In the limit case, where one or more spins $\sigma_i$ tend to zero, the associated CTC regions $V_i$ become empty.  In that situation, one can extend the  notion
of causal curves by including curves that contain components of the singular lines.  In this setting, our notion of global hyperbolicity requires that there is a closed Cauchy surface
 intersected by all inextendible curves that are causal in that sense.

\subsection{Doubling the spacetime along the CTC surface}
\label{sub:double}

Classical results involving global hyperbolicity are not available for
spacetimes with boundary  such as the interior region $M^*$ in Definition \ref{def:spinGH}.
However, we can nevertheless relate these spacetimes to the classical framework by employing the following ``doubling the spacetime'' trick.

Let $M$ be a singular flat spacetime satisfying the first condition in Definition \ref{def:spinGH},  and denote by  $M^*=M\setminus\bigcup_{i=1}^n V_i$ its interior region.
For each singular line $d_i$, let $\theta_i$ be the cone angle around $d_i$, $\sigma_i$ the spin and let $r_i=\sigma_i/\theta_i$ denote the associated CTC radius.
The isometry between the boundary components $V_i$ of $M$ and the CTC cylinders defines a local coordinate system $(r,\alpha,\tau)$ in a neighbourhood $U_i\subset M^*$ of each boundary component, in which the metric takes the form \eqref{pback} (with $\theta_0$ replaced by $\theta_i$ and $\sigma$ by $\sigma_i$).
Define a new coordinate $\xi$ on $U_i$ through the condition  $r=\vert r_i \vert\cosh\xi$, $\xi\geq0$ in the interior region near each surface $\partial V_i$. In terms of these coordinates the
metric \eqref{pback} for each particle takes  the form
$$-\frac1{\theta_i^2}\,d\tau^2 - \frac{r_i}{\pi}\,d\alpha d\tau + r_i^2(\sinh\xi)^2\left[\left(\frac{\theta_i}{2\pi}\right)^2d\alpha^2 + d\xi^2\right]$$

When the CTC cylinder is a finite $h_i$-cylinder one can prescribe the coordinate $\tau$ to vary in $[-h_i/2, h_i/2]$.
As the number of particles is finite,  there exists an $\epsilon_0>0$ such that the subsets of  the neighbourhoods $U_i$
characterised by the condition $\xi\leq \epsilon_0$
define $n$ solid pairwisely disjoint cylinders
that contain all surfaces $\partial V_i$.

Consider two copies $M^*_1,$ $M^*_2$ of $M^*$ and glue them along their boundaries in the obvious way.
More precisely, let $f_1: M^* \to M^*_1$ and $f_2: M^* \to M^*_2$ be two identifications and consider the union of $M^*_1$ and $M^*_2$ with
$f_1(p)$ and $f_2(p)$ identified for every $p$ in $\partial M^*$.
We get a manifold $M^d$, containing a surface $T$
(the locus where the glueing has been performed) and two embeddings $\bar{f}_1: M^* \to M^d,$ $\bar{f}_2: M^* \to M^d$.
In the following, $M^*$ is referred to as the doubling of $M^*$ along $T$.
A neighbourhood of every connected component of $T$ in $M^d$ can be parametrized by coordinates
$(\alpha, \tau, \xi)$ but where now $\xi$ is allowed to vary in $[-\epsilon_0, \epsilon_0]$, hence to have negative values.
Positive values of $\xi$ correspond to points in the first copy $\bar{M}^*_1:=\bar{f}_1(M_1^*)$
whereas negative
values represent points in $\bar{M}^*_2:=\bar{f}_1(M_2^*)$ . The surface $\bar{T}=\bar{f}_{1,2}(T)$ is characterised by $\xi=0$.

The manifold $M^*$ is equipped with a metric $g_0$ which, however, becomes degenerate on $\bar{T}$.
Nevertheless, it is
 still reasonable to study the causality properties of such
a degenerate cone field.
A convenient way to do so is to consider $g_0$ as the limit of non-degenerate Lorentzian metrics.
For this we introduce a bump function $\eta \colon [0, +\infty) \rightarrow [0, 1]$, which is a non-increasing smooth function that
vanishes on the interval $(1,\infty)$ and takes constant value $1$ on $[0, 1/2]$. For every $0<\epsilon < \epsilon_0$, we define
\begin{align}\label{epsilonmetric}
g_\epsilon := -\frac1{\theta_i^2}\,d\tau^2 - \frac{r_i}{\pi}d\alpha d\tau + \left(r_i^2(\sinh\xi)^2 + \epsilon\eta(\vert\xi\vert/\epsilon)\right)\left[\left(\frac{\theta_i}{2\pi}\right)^2d\alpha^2 + d\xi^2\right]
\end{align}

Then $g_\epsilon$ is a Lorentzian metric on $M^d$, equal to the flat metric on the region $\xi \geq \epsilon$,
and converges with respect to the $C^0$-norm to the degenerate flat metric $g_0$ for $\epsilon \to 0$.

One can allow the coordinate $\xi$  in \eqref{epsilonmetric} to take values  on the entire real line. In this case, it defines a Lorentzian metric that becomes degenerate for $\epsilon=0$ and approximates the doubling $M^d_{\theta_i, \sigma_i}$ of the interior region $M^*_{\theta_i, \sigma_i}$ of $M_{\theta_i, \sigma_i}$.

\begin{lemma}
\label{le:epsilongh}
Every spacetime $(M^d_{\theta_i, \sigma_i}, g_\epsilon)$ for $\epsilon > 0$ is globally hyperbolic, and every level set of $\tau$ is a Cauchy surface.
\end{lemma}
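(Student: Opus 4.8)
The plan is to exhibit the coordinate function $\tau$ as a Cauchy temporal function for $g_\epsilon$, since a spacetime is globally hyperbolic with the level sets of $\tau$ as Cauchy surfaces as soon as $\tau$ is a time function that is surjective along every inextendible causal curve. Writing the metric in the coordinates $(\tau,\alpha,\xi)$ and setting $F(\xi):=r_i^2\sinh^2\xi+\epsilon\,\eta(|\xi|/\epsilon)$, I would first observe that $F(\xi)>0$ for all $\xi$ precisely because $\epsilon>0$: when $\eta(|\xi|/\epsilon)$ vanishes one has $|\xi|\ge\epsilon$ and hence $\sinh^2\xi>0$, while at $\xi=0$ the bump term gives $F(0)=\epsilon$. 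This positivity makes the induced metric $F(\xi)\big[(\theta_i/2\pi)^2\,d\alpha^2+d\xi^2\big]$ on each level set $\{\tau=c\}$ Riemannian, so the level sets are spacelike.

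Next I would check that $\tau$ is a time function by computing the norm of its gradient: inverting the $(\tau,\alpha)$-block of the metric gives $g^{\tau\tau}=-F\theta_i^2/(F+r_i^2)$, which is strictly negative. Thus $\nabla\tau$ is everywhere timelike, so (after fixing the time orientation) $\tau$ increases strictly along every future-directed causal curve, and each level set is achronal and met at most once.

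The heart of the argument, and the step I expect to be the main obstacle, is to show that $\tau$ is \emph{unbounded} along every inextendible causal curve, which upgrades ``at most once'' to ``exactly once''. Since $\dot\tau\neq0$ I would reparametrise a causal curve by $\tau$ itself, turning it into a graph $\tau\mapsto(\alpha(\tau),\xi(\tau))$, and feed the causal inequality $g_\epsilon(\dot c,\dot c)\le0$ into a one-variable optimisation over $\alpha':=d\alpha/d\tau$. Maximising the right-hand side of $F\,\xi'^2\le \tfrac{1}{\theta_i^2}+\tfrac{r_i}{\pi}\alpha'-F(\tfrac{\theta_i}{2\pi})^2\alpha'^2$ over $\alpha'$ yields the a priori bound
\[
\Big(\frac{d\xi}{d\tau}\Big)^2\;\le\;\frac{F+r_i^2}{F^2\,\theta_i^2}.
\]
The decisive point is that the right-hand side stays bounded on all of $\mathbb{R}$: it tends to $0$ as $|\xi|\to\infty$ because $F$ grows like $r_i^2\sinh^2\xi$, and it is finite at $\xi=0$ exactly because $F(0)=\epsilon>0$. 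Hence $|d\xi/d\tau|\le C_\epsilon$ for a constant $C_\epsilon<\infty$ depending only on $\epsilon$; this is where the hypothesis $\epsilon>0$ enters in an essential way, since for $\epsilon=0$ the bound blows up at the degenerate locus $\{\xi=0\}$.

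Finally I would close by contradiction: if $\tau$ stayed bounded, say $\tau<\tau_1<\infty$, on a future-inextendible causal curve, then the derivative bound forces $\xi$ to remain in a compact interval; there $F$ is bounded below, so the causal inequality becomes a quadratic constraint on $\alpha'$ that bounds $\alpha'$ as well, and as $\alpha$ already lives on a circle the curve is Lipschitz in $\tau$ and stays in a compact subset of $M^d_{\theta_i,\sigma_i}$. It therefore converges to an endpoint as $\tau\to\tau_1$ and can be extended past it as a causal curve, contradicting inextendibility; the symmetric argument toward the past gives $\tau\to-\infty$. Consequently every inextendible causal curve crosses every level set of $\tau$ exactly once, the level sets are Cauchy surfaces, and $(M^d_{\theta_i,\sigma_i},g_\epsilon)$ is globally hyperbolic.
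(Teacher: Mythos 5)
Your proof is correct, and it takes a genuinely different route from the paper's. The paper disposes of the lemma in four lines by two reductions: since $F_0(\xi)=r_i^2\sinh^2\xi\leq F_\epsilon(\xi)$, every $g_\epsilon$-causal curve is causal for the degenerate metric $g_0$; and the folding map $\xi\mapsto|\xi|$ is a $\tau$-preserving isometric branched cover $s^d\colon M^d_{\theta_i,\sigma_i}\to M^*_{\theta_i,\sigma_i}$, so the claim follows from the statement (asserted in Section~2.4) that $M^*_{\theta_i,\sigma_i}$ is globally hyperbolic relatively to its boundary, with the level sets of $\tau$ as Cauchy surfaces. Your argument, by contrast, is entirely self-contained: you verify $g^{\tau\tau}=-F\theta_i^2/(F+r_i^2)<0$ directly, extract the a priori bound $(d\xi/d\tau)^2\leq (F+r_i^2)/(F^2\theta_i^2)$ by optimising the causal inequality over $\alpha'$, and conclude by a standard compactness-and-extendibility argument. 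What the paper's route buys is brevity and reuse of an earlier structural fact; what yours buys is independence from that earlier (and itself only sketched) claim, plus a precise identification of where the hypothesis $\epsilon>0$ enters --- your derivative bound degenerates exactly at $\{\xi=0\}$ when $\epsilon=0$, which illuminates why the limit metric $g_0$ fails to be globally hyperbolic in the naive sense. Both arguments are sound; I only note that in your final compactness step you implicitly use that causal curves can be reparametrised by $\tau$, which is justified because a timelike covector is nonvanishing on every causal vector --- worth stating explicitly.
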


\begin{proof}
The level sets of $\tau$ are spacelike for every $g_\epsilon$, hence $\tau$ is a time function.
Let $c: I \mapsto M^d_{\theta_i, \sigma_i}$ be an inextendible $g_\epsilon$-causal curve. Then it is also $g_0$-causal.
The map $\xi \mapsto \vert \xi \vert$ induces an isometric branched covering $s^d: M^d_{\theta_i, \sigma_i} \to M^*_{\theta_i, \sigma_i}$ that preserves the coordinate $\tau$.
As $M^*_{\theta_i, \sigma_i}$ is globally hyperbolic relatively to its boundary (cf. section~\ref{csurfacesec}) the image of $c$ by $s^d$ must intersect every level set
of $\tau$. The lemma follows.
\end{proof}

\subsection{A criterion for global hyperbolicity}

\begin{prop}
\label{pro:critereGH} Let $M$ be a singular flat spacetime satisfying the first condition in Definition \ref{def:spinGH}. Assume that the closure of the interior region $M^*$ contains no closed causal curves except the null circles in the  surface $\partial V_i$, and that for all $p,q\in M^*$,  the intersection
$J^+(p) \cap J^-(q)$ between the causal future of $p$ and the causal past of $q$ is either compact or empty. Then $M^*$ admits a Cauchy surface.
\end{prop}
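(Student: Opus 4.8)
\emph{Strategy.} The plan is to reduce the statement to the classical criterion for global hyperbolicity of a \emph{boundaryless} spacetime: a \emph{causal} Lorentzian manifold all of whose causal diamonds $J^+(p)\cap J^-(q)$ are compact admits a Cauchy surface (Geroch's theorem, in the sharpening of Bernal--S\'anchez in which strong causality is weakened to causality). To apply it I would pass to the double $M^d$ of $M^*$ equipped with the approximating metrics $g_\epsilon$ of Section~\ref{sub:double}, and exploit two structures: the folding map $s^d\colon M^d\to M^*$ (the isometric branched double cover $\xi\mapsto|\xi|$ of Lemma~\ref{le:epsilongh}, preserving $\tau$), and the isometric involution $\iota$ of $M^d$ that interchanges the two copies $\bar M^*_1,\bar M^*_2$ and fixes $T$. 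Throughout I use the cone comparison that, since $g_\epsilon-g_0$ is positive semidefinite, every $g_\epsilon$-causal vector is $g_0$-causal; hence every $g_\epsilon$-causal curve projects under $s^d$ to a $g_0$-causal curve in $\overline{M^*}$, and $x\le_{g_\epsilon}y$ implies $s^d(x)\le s^d(y)$ in $M^*$.

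\emph{Global hyperbolicity of $(M^d,g_\epsilon)$.} First I would check causality. A closed $g_\epsilon$-causal curve $c$ is $g_0$-causal, so $s^d\circ c$ is a causal loop in $\overline{M^*}$; by hypothesis it is a null circle in some $\partial V_i\subset T$, and since $s^d$ is one-to-one over the branch locus $T$, the curve $c$ itself equals that null circle. But on $T$ one has $\xi=0$, where the extra term $\epsilon\,\eta(0)\big[(\theta_i/2\pi)^2d\alpha^2+d\xi^2\big]=\epsilon\big[(\theta_i/2\pi)^2d\alpha^2+d\xi^2\big]$ is strictly positive in the $\alpha$-direction, so a null circle (a curve $\tau=\text{const}$, $\xi=0$) is $g_\epsilon$-\emph{spacelike} and cannot be $g_\epsilon$-causal, a contradiction. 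Next, for the diamonds: if $z\in J^+_{g_\epsilon}(p)\cap J^-_{g_\epsilon}(q)$ then $s^d(z)\in J^+(s^dp)\cap J^-(s^dq)=:K$, which is compact by hypothesis (b) (the interior region being taken with its boundary). As $s^d$ is proper, the diamond lies in the compact set $(s^d)^{-1}(K)$; together with causality this gives compact diamonds, so $(M^d,g_\epsilon)$ is globally hyperbolic and carries a Cauchy temporal function.

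\emph{Descent to $M^*$.} The involution $\iota$ is a $g_\epsilon$-isometry preserving $\tau$, hence preserving time orientation, so averaging a Cauchy temporal function $h$ with $h\circ\iota$ yields an $\iota$-invariant function whose gradient stays timelike and which is unbounded along every inextendible causal curve; its zero set $S^d_\epsilon$ is therefore an $\iota$-invariant smooth $g_\epsilon$-spacelike Cauchy surface. Since the induced metric on $T$ is Lorentzian, $T$ is a timelike hypersurface and meets the spacelike $S^d_\epsilon$ in a spacelike circle in each component; thus $S^*_\epsilon:=\bar f_1^{-1}\big(S^d_\epsilon\cap\bar M^*_1\big)$ is a $g_\epsilon$-spacelike surface-with-boundary in $M^*$ whose boundary circles lie on the $\partial V_i$. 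Letting $\epsilon\to0$, the induced metric on $T$ degenerates and these spacelike boundary circles become the only admissible non-timelike loops, namely the null circles (cf. Section~\ref{ctcsec}), so that the limit $S^*$ is a surface with spacelike interior, null-circle boundary in the $\partial V_i$, met by every inextendible causal curve contained in $M^*$.

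\emph{Main obstacle.} The delicate point is precisely the limit $\epsilon\to0$. Because $g_0$ is degenerate on $T$ and has strictly wider cones than $g_\epsilon$, neither ``spacelike'' nor ``Cauchy'' passes automatically from $g_\epsilon$ to the genuine flat, degenerate-boundary causal structure of $M^*$: a $g_\epsilon$-spacelike surface may fail to be $g_0$-spacelike inside the collar $|\xi|\le\epsilon$. I would therefore control the surfaces uniformly near $T$ by arranging $S^d_\epsilon$ to coincide, in a collar of each $\partial V_i$, with level sets of the \emph{local} time function $\tau$, which are spacelike for every $g_\epsilon$ including $g_0$ (Section~\ref{ctcsec}); this makes the limiting interior genuinely $g_0$-spacelike and guarantees that no inextendible $M^*$-causal curve can slip past $S^*$ near the boundary. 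Establishing this uniform collar control, and the convergence of the $S^d_\epsilon$, is the technical heart of the argument.
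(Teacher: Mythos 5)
Your overall strategy is the one the paper itself uses: double $M^*$ along the CTC surfaces, approximate the degenerate metric $g_0$ by the Lorentzian metrics $g_\epsilon$ of Section~\ref{sub:double}, prove that $(M^d,g_\epsilon)$ is globally hyperbolic, and then pass to the limit $\epsilon\to 0$. Your first step coincides with the paper's almost verbatim (every $g_\epsilon$-causal vector is $g_0$-causal, the proper branched cover $s^d$, the observation that the null circles are $g_\epsilon$-spacelike, compactness of diamonds pulled back through $s^d$), and the use of the involution $\iota$ to produce an invariant Cauchy surface that descends cleanly to a surface with boundary on $T$ is a tidy refinement the paper does not bother with.

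The gap is that the two steps you yourself flag as ``the technical heart'' are precisely the ones the paper spends most of its proof on, and your sketch of them does not yet work. (a) You propose to force $S^d_\epsilon$ to agree with $\tau$-level sets in a collar of each $\partial V_i$. But the Cauchy surface you are handed outside the collar may sit at an arbitrary height in the $\tau$-direction relative to the level set you want to splice in, and nothing in your argument controls this; an interpolation between the two pieces need not stay spacelike, nor need the modified surface remain Cauchy. The paper resolves exactly this by normalizing all $S_\epsilon$ to pass through a fixed point $x$ with $\tau(x)=0$ on each CTC cylinder, applying the Ascoli--Arzel\`a theorem to the graphs $f_{i,\epsilon}$ to extract a convergent subsequence inside the collar, and then (for $n\geq 2$) iterating the normalization component by component while trapping the surfaces between barriers $\Sigma_i^{\pm}$. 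Some such compactness or normalization device is needed; ``arranging'' the surfaces to be level sets is not available for free. (b) Even granted a compact $g_0$-spacelike surface $S$ with null-circle boundary that coincides with a $g_\epsilon$-Cauchy surface outside the collar, it does not follow formally that $S\cap M^*$ is Cauchy for $g_0$: the $g_0$-cones are strictly wider than the $g_\epsilon$-cones inside the collar, so there are more $g_0$-causal curves than $g_\epsilon$-causal ones, and an inextendible $g_0$-causal curve could a priori dodge $S$ by repeatedly entering and leaving the region $K_0$. The paper's entire third step is devoted to ruling this out (the argument with $t_0$, with $T=\sup\{t : \gamma(t)\in K_0\cap J^-_\epsilon(\widehat{S}_\epsilon)\}$, and with the compactness of $J^+_\epsilon(\gamma(0))\cap J^-_\epsilon(\widehat{S}_\epsilon)$); your claim that the collar control ``guarantees that no inextendible causal curve can slip past $S^*$'' is the conclusion, not an argument for it. Until (a) and (b) are supplied, the proof is incomplete.
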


\begin{proof}$\quad$\newline
1. This lemma  is well-known in the non-degenerate case and is at the  foundation of the notion
of global hyperbolicity.
To prove it for the degenerate case, we first observe  that the metric $g_0$ has no closed causal curves (CCC) except the null circles. Indeed,
consider the map $s \colon M^d \rightarrow M^*$ (a branched cover) that sends the points $\bar{f}_1(p)$ and $\bar{f}_2(p)$ to $p$.
It is an isometry with respect to the metric $g_0$.  If $c$ is a CCC in $M^d$ for the metric $g_0$, its image $s(c)$ under $s$ is a CCC for the flat metric in $M^*$ and  hence,
by hypothesis, a null circle. Now observe that $g_0$ weakly
dominates all the metrics $g_\epsilon$, in the sense that every causal
curve for $g_\epsilon$ is also causal for the degenerate metric $g_0$. A direct calculation shows that the null circles in the CTC surfaces are spacelike for
$g_\epsilon$. It follows that
the metrics $g_\epsilon$ have no closed causal curves.

For every point $p$ in $M^d$, denote by $J^\pm_\epsilon(p)$  the causal past (-) and future (+)  of $p$ in $M^d$ with respect to the metric $g_\epsilon$ and by $J^\pm(p)$ its causal past (-) and future (+) with respect to $g_0$. As every causal curve for $g_\epsilon$ is a causal curve for $g_0$,   the intersection $J_\epsilon^+(p) \cap J_\epsilon^-(q)$ is contained in $J^+(p) \cap J^-(q)$ for all $p,q\in M^d$. Assume that $J_\epsilon^+(p) \cap J_\epsilon^-(q)$ is not empty.
Then $s \colon M^d \rightarrow M^*$ maps $J^+(p) \cap J^-(q)$ into a closed subset of
$J^+(s(p)) \cap J^-(s(q))$. On the other hand, $s$ is a proper map. As $J^+(s(p)) \cap J^-(s(q))$
is compact by hypothesis, the same holds for
$J^+(p) \cap J^-(q)$. As  $J_\epsilon^+(p) \cap J_\epsilon^-(q)$ is a closed subset of $J^+(p) \cap J^-(q)$,
it is therefore also compact. This proves that there exists an $\epsilon_0>0$ such that the metric $g_\epsilon$ is globally hyperbolic for all  $\epsilon < \epsilon_0$.

\bigskip

2.
For every $\epsilon < \epsilon_0$ let $S_\epsilon$ be a Cauchy surface for $g_\epsilon$.
Denote by $\widehat{S}_\epsilon$ the intersection of the Cauchy surface $S_\epsilon$ with the interior region $M^*$, considered as a subset of $M^d$.
Denote by  $K_\epsilon$ the region in  $M^*$ that is characterised by the  condition $\xi \leq \epsilon$
and by $K^\epsilon_1$, ... , $K^\epsilon_n$ its connected components.
Recall that  $g_\epsilon$ is equal to $g_0$ outside $K_\epsilon$.
The  intersection of $\widehat{S}_\epsilon$ with every connected component $K^\epsilon_i$
is the graph of a map $(\alpha, \xi) \mapsto f_{i, \epsilon}(\alpha, \xi)$ which takes values in $(-h_i/2, h_i/2)$.

\medskip
{\em Claim: There is a compact $g_0$-spacelike hypersurface $S$ and a positive real number $\epsilon<\epsilon_0$ such that  $S$ coincides with a Cauchy surface $S_\epsilon$
of $(M^d, g_\epsilon)$ in the region $M^d \setminus K_\epsilon$.}

To prove the claim, we first assume that the spacetime admits only one particle ($n=1$). We fix a point $x$ in the CTC cylinder  characterised by the condition $\xi = 0$, which is the boundary of the interior region $M^\ast$. Without loss of generality, we select $x$ such that
its $\tau$-coordinate vanishes. Then, we can assume without loss of generality that the Cauchy surfaces $S_\epsilon$ have been chosen in such a way that  they all contain $x$.

By applying the Ascoli-Arzela Theorem to $f_{i,\epsilon}$, one then obtains  directly that there is a subsequence of the sequence of  surfaces $S_{1/k}$, $k\in\mathbb N$, which converges to a $g_0$-spacelike hypersurface $S_\infty$ in the region $K_{\epsilon_0} = \{ \xi\leq \epsilon_0\}$.
Note, however,  that outside  the region $K_{\epsilon_0}$, these surfaces
may escape to infinity when $k\to \infty$.  This issue can be addressed as follows: for $\epsilon$ sufficiently small, one can extend the part of $S_\epsilon$ outside $K_{\epsilon_0}$ by a surface approximating $S_\infty$, which is $g_0$-spacelike (details are left to the reader).
We then obtain a compact surface $S$ which, as required, is $g_0$-spacelike and coincides with $S_\epsilon$ outside of $K_{\epsilon_0}$ (recall that $g_\epsilon$ and $g_0$ coincide there). This proves the claim for $n=1$.

Consider now the case $n\geq 2$. Fix a point $x_1$ in the CTC cylinder in the first component $K^{\epsilon_0}_1$, and assume that every $S_\epsilon$ contains $x_1$.
Reasoning as above, we construct a surface $\Sigma_1$ which coincides with $S_{\epsilon_1}$ (for some $\epsilon_1$) outside $K^{\epsilon_0}_1$ and is
 $g_0$-spacelike in $K^{\epsilon_0}_1$.
Denote by $\Sigma_1^+$, $\Sigma_1^-$ the two compact surfaces obtained by pushing in the future (respectively in the past) the surface $\Sigma_1$ in such a way that the resulting surfaces are $g_0$-spacelike in $K^{\epsilon_0}_1$
and $g_{\epsilon_1}$-spacelike outside $K^{\epsilon_0}_1$.
We consider the region $W_1$ between $\Sigma_1^+$ and $\Sigma_1^-$. As the surfaces $\Sigma^\pm_1$ are $g_\epsilon$-spacelike for every $\epsilon$, $W_1\setminus (K^{\epsilon_0}_2 \cup \ldots \cup K^{\epsilon_0}_n)$
is globally hyperbolic for every $g_\epsilon$. It follows that the surfaces $S_\epsilon$ can be  selected in such a way  that they all lie in $W_1$, with the possible exception of the region $K^{\epsilon_0}_2 \cup \ldots \cup K^{\epsilon_0}_n$.

We now drop the condition $x_1 \in S_\epsilon$ and replace it by an analogous condition for the second connected component:  we impose that all surfaces $S_\epsilon$ contain a given point $x_2$ in the CTC cylinder in $K^{\epsilon_0}_2$.  Repeating the argument above, we obtain two disjoint surfaces $\Sigma^+_2$, $\Sigma_2^-$ which
\begin{itemize}
\item are chosen in such a way that $\Sigma_2^+$ lies in the future of $\Sigma_2^-$
\item are $g_0$-spacelike in the region $K^{\epsilon_0}_1 \cup K^{\epsilon_0}_2$,
\item are $g_{\epsilon_2}$-spacelike in $K^{\epsilon_0}_3 \cup \ldots \cup K^{\epsilon_0}_n$,
\item lie between the surfaces $\Sigma_1^+$ and $\Sigma_1^-$ outside of $K^{\epsilon_0}_2 \cup \ldots \cup K^{\epsilon_0}_n$.
\end{itemize}
We now impose as a condition that the Cauchy surfaces $g_\epsilon$ lies between $\Sigma_2^-$ and $\Sigma_2^+$, with the possible exception of region $K^{\epsilon_0}_3 \cup \ldots \cup K^{\epsilon_0}_n$.  Iterating this process, we obtain two compact surfaces $\Sigma_n^+$, $\Sigma_n^-$ which are $g_0$-spacelike everywhere. We conclude the proof of the claim as in the case $n=1$.

\bigskip
3. After proving the claim, we  resume our proof of Proposition~\ref{pro:critereGH}.
To conclude this proof, we show that the surface $\widehat{S} = S \cap M^\ast$ is a Cauchy surface for $g_0$. Let $\gamma: I\rightarrow M^*$, $t \mapsto \gamma(t)$ be an inextendible future oriented causal curve in $M^*$
for the metric $g_0$. Assume without loss of generality that $\gamma(0)$ lies in the past of $\widehat{S}_\epsilon$ for the metric $g_\epsilon$. By way of
contradiction, assume that $\gamma$ never intersects $\widehat{S}$.

Define   $t_0=\text{sup}\{t\in I\,|\, \gamma(s)\notin K_0\,\forall s\in I, s\leq t\}$ . By definition of $K_0$, this implies that  for all  $t \leq t_0$ $\gamma(t)$ lies in the region where $g_0$ and $g_\epsilon$ are equal.  Hence the restriction of
 $\gamma$ to the interval ${I\cap (-\infty, t_0]}$ is a causal curve with respect to $g_\epsilon$. As the surface $\widehat{S}_\epsilon$ is a Cauchy surface for $g_\epsilon$ and
 coincides with $\widehat{S}$ outside $K_0$, it follows from the assumption  that $t_0$ must be finite. Moreover,   $\gamma(t_0)$
lies in the past of the Cauchy surface  $\widehat{S}_\epsilon$ and therefore under the graph of $\hat{f}$.

However, by hypothesis,
$K_0\cap \gamma(I)$,  cannot intersect the graph of $\hat{f}$. It follows that $\gamma$ must exit the region $K_0$ and that its exit point
it is still in the past of $\widehat{S}_\epsilon$ with respect to $g_\epsilon$.
Let $T=\sup\{t\in I:\gamma(t)\in K_0\cap J^-_\epsilon(\widehat{S}_\epsilon)\}$ and
let $(t_n)_{(n \in \N)}$ be an increasing sequence such that $t_n \to T$. Observe that $T$ might be infinite.
Every  point $\gamma(t_n)$ lies in the future of $\gamma(0)$.
As $g_\epsilon$ is globally hyperbolic, $J^+_\epsilon(\gamma(0)) \cap J^-_\epsilon(\widehat{S}_\epsilon)$ is compact and  the sequence $\gamma(t_n)$ converges.  As $\gamma$ is inextendible, it follows that $T$ is finite, and the  limit must be $\lim_{n\to\infty} \gamma(t_n)=\gamma(T)$. In particular, this implies that $T$ is finite and  $\gamma(T)$ lies
in $K_0 \cap J^-_\epsilon(\widehat{S}_\epsilon)$. By hypothesis, $\gamma(T)$ is not in $\widehat{S}_\epsilon$ because $\widehat{S}_\epsilon \cap K_0 \subset S$. This implies that for some $t > T$,
$\gamma(t)$ is still in the past of $\widehat{S}_\epsilon$. But the argument above, proving that $t_0$ is finite, implies that $\gamma$ should meet $K_0$ once more, which is a
contradiction.

This implies  that every  inextensible causal curve  with respect to $g_0$  must intersect the surface $\widehat{S}$. Hence $\widehat{S}$ is a Cauchy surface, and $M^*$ is globally hyperbolic.
\end{proof}



\section{Construction of stationary flat spacetimes} \label{statsec}

\subsection{Euclidean surfaces with cone singularities}\label{euclcone}


After discussing the one-particle model and introducing a notion of global hyperbolicity, we will now construct examples of flat  Lorentzian spacetimes with particles. The resulting spacetimes are stationary and the construction is based on Euclidean surfaces with cone singularities.
In the following, we denote by    $\R^2_\theta$ for $\theta>0$ the  Euclidean plane with one singular point of cone angle $\theta$, that is $\R^2 \setminus \{ 0 \}$ with the metric given in
\eqref{planemetric}.

\begin{defi}
A Euclidean metric with cone singularities on a closed orientable surface $\Sigma$ consists of a finite number
of points $p_1$, ... , $p_n$ (the cone singularities) together with an assignment of  positive real numbers $\theta_i>0$ (the cone angles) to $p_i$ for $i=1,...,n$,
and a flat Riemannian metric on $\Sigma^* = \Sigma \setminus \{ p_1, ... , p_n \}$, such that every point $p_i$
admits a neighbourhood $U_i$ in $\Sigma$ so that $U_i \setminus \{ p_i \}$
is isometric to a ball in $\R^2_{\theta_i}$ centred at the singular point.
\end{defi}

Note that the quantities $\mu_i := 2\pi - \theta_i,$ which in (2+1)-gravity are interpreted as masses  of the particles, are usually  referred to as  \textit{apex curvatures} in the mathematics literature (see for example \cite{thurstonshape}).   They are subject to the relation
$$2\pi\chi(\Sigma) = \sum_{i=1}^n \mu_i$$
where $\chi(\Sigma)$ denotes the Euler characteristic of $\Sigma$. In particular,
if all the cone angles satisfy $\theta_i\leq 2\pi$, then the surface $\Sigma$ is either the sphere of the torus, and the torus arises only if there is no singularity.

Observe that the flat Euclidean structure  naturally defines a conformal structure on $\Sigma^*$ and  the punctures $p_i$ correspond to the cusps of this conformal structure.
Consequently, the flat Euclidean structure  equips $\Sigma$ with the structure of a Riemann surface. That the converse is also true follows from a theorem by Troyanov.

\begin{theorem}[\cite{troyanov1}]
Let $p_1$, ... , $p_n$ be a collection of $n$ points in $\Sigma$, and $\theta_1$, ... , $\theta_n$ positive real numbers such that
$$2\pi\chi(\Sigma) = \sum_{i=1}^n (2\pi-\theta_i)$$
Then, for any conformal structure on $\Sigma$, there is an Euclidean metric on $S$ with cone singularities
of cone angles $\theta_i$ at each $p_i$ that induces the given conformal structure. This
singular Euclidean metric is unique up to a rescaling factor - in particular, it is unique
if we require the total volume to be equal to $1$.
\end{theorem}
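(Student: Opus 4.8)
The plan is to reduce the problem to a \emph{linear} elliptic equation, exploiting the fact that the target metric is required to be flat rather than of prescribed nonzero curvature. I would fix a smooth background metric $g_0$ in the given conformal class and write every conformal metric as $g = e^{2u}g_0$. In dimension two the Gaussian curvatures are related by $K_g\,e^{2u} = K_{g_0} - \Delta_{g_0}u$, where $\Delta_{g_0}$ is the Laplace--Beltrami operator of $g_0$ (in the convention with nonpositive spectrum). Imposing flatness, $K_g \equiv 0$ on $\Sigma^* = \Sigma\setminus\{p_1,\dots,p_n\}$, therefore amounts to the linear equation $\Delta_{g_0}u = K_{g_0}$ on $\Sigma^*$. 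A conical singularity of angle $\theta_i$ at $p_i$ corresponds, in a local conformal coordinate $z$ centred at $p_i$, to the model metric $|z|^{2\beta_i}|dz|^2$ of \eqref{planemetric} with $\beta_i := \theta_i/2\pi - 1 = -\mu_i/2\pi$; equivalently $u$ must have the asymptotics $u = \beta_i\log|z| + O(1)$ near $p_i$. Since $\Delta_{g_0}(\beta_i\log|z|) = 2\pi\beta_i\,\delta_{p_i}$ in the sense of distributions, the whole problem is encoded in the single distributional equation
\[
\Delta_{g_0}u = K_{g_0} + 2\pi\sum_{i=1}^n \beta_i\,\delta_{p_i}
\]
on the \emph{closed} surface $\Sigma$.

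Next I would establish existence. The Poisson equation $\Delta_{g_0}u = f$ on a closed surface is solvable precisely when $\langle f,1\rangle = 0$, and the solution is then unique up to an additive constant. Here the total mass of the right-hand side is
\[
\int_\Sigma K_{g_0}\,dA_{g_0} + 2\pi\sum_{i=1}^n\beta_i = 2\pi\chi(\Sigma) - \sum_{i=1}^n\mu_i,
\]
using Gauss--Bonnet together with $2\pi\beta_i = \theta_i - 2\pi = -\mu_i$. The hypothesis $2\pi\chi(\Sigma) = \sum_i\mu_i$ is thus \emph{exactly} the solvability condition, which explains why it must appear. Concretely I would write $u = 2\pi\sum_i\beta_i\,G(\cdot,p_i) + w$, where $G$ is the Green's function of $\Delta_{g_0}$ on $\Sigma$, whose logarithmic singularity at each $p_i$ supplies the required $\beta_i\log|z|$ behaviour, and where $w$ then solves a smooth Poisson equation with vanishing mean, solvable by the Fredholm alternative for the self-adjoint operator $\Delta_{g_0}$ whose kernel and cokernel are the constants.

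The point demanding the most care --- and what I expect to be the main obstacle --- is passing from this analytic solution back to the geometric assertion, namely checking that $g = e^{2u}g_0$ genuinely has a cone singularity of angle exactly $\theta_i$ at $p_i$, rather than merely a logarithmic-type singularity. This requires the sharp local expansion $u = \beta_i\log|z| + C_i + o(1)$, which follows from the known asymptotics of $G(\cdot,p_i)$, followed by a local change of coordinate exhibiting $e^{2u}g_0$ as isometric to a ball in $\R^2_{\theta_i}$ centred at the singular point, as in the definition; this is a local uniformization computation near each puncture. Finally, uniqueness follows from the linear structure: if $u_1$ and $u_2$ both solve the problem, then $u_1 - u_2$ is harmonic on $\Sigma^*$ and bounded near each $p_i$ (the identical log singularities cancel), hence extends harmonically across the punctures and is constant on the closed surface $\Sigma$ by the maximum principle. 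Thus $g_1 = e^{2c}g_2$ is a rescaling, and normalising the total volume to $1$ determines $c$ and yields the asserted uniqueness.
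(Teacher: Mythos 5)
The paper offers no proof of this statement: it is imported verbatim from Troyanov's work \cite{troyanov1} as a black box, so there is no internal argument to compare yours against. That said, your proof is correct and is essentially the classical potential-theoretic argument that Troyanov himself uses in the flat case: because the target curvature is zero, the Liouville equation degenerates to the linear Poisson equation $\Delta_{g_0}u = K_{g_0} + 2\pi\sum_i\beta_i\delta_{p_i}$, the hypothesis $2\pi\chi(\Sigma)=\sum_i(2\pi-\theta_i)$ is precisely the vanishing of the total mass required by the Fredholm alternative (via Gauss--Bonnet), and the Green's function ansatz produces the solution. You correctly flag the one step that genuinely needs care, namely upgrading the conformal asymptotics $u=\beta_i\log|z|+O(1)$ to an isometry with a ball in $\R^2_{\theta_i}$: this works because flatness forces the remainder to be harmonic near $p_i$, hence the real part of a holomorphic $F$, so the metric is $\bigl|d\bigl(\int z^{\beta_i}e^{F}dz\bigr)\bigr|^2$ and $w\mapsto w^{\beta_i+1}$ exhibits the standard cone (this needs $\beta_i>-1$, i.e.\ $\theta_i>0$, which is assumed). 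The only small point you leave implicit is the converse normalisation used in the uniqueness step: one must know that \emph{any} metric in the conformal class with a cone of angle $\theta_i$ at $p_i$, in the sense of being isometric to the model, has conformal factor with exactly the singularity $\beta_i\log|z|+O(1)$ in a holomorphic coordinate, so that the difference $u_1-u_2$ is indeed bounded; this is standard (the model cone is conformally $|w|^{2\beta_i}|dw|^2$ and the coefficient of the logarithm is a conformal invariant), but it deserves a sentence. With that addition the argument is complete.
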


The study of singular Euclidean surfaces is a very traditional topic in mathematics.
For instance, it is related to billiards.  A way of investigating a billiard in a polygon in the Euclidean plane is
to consider it as the geodesic flow of the singular flat Euclidean metric on  the sphere, which is
obtained by taking the double of the polygon along its sides (see \cite{matab}).

An important  case is the  one in which all  cone angles are  rational.
For instance, if all of these angles are multiples of $\pi$, the associated singular Euclidean metric is directly related to holomorphic quadratic differentials. 

On the other hand,  the ``positive curvature case'', in which all  cone angles are less than $2\pi$ and  the Euclidean
surface is a sphere sphere with conical singularities,
there is always a geodesic triangulation of the singular Euclidean surface $\Sigma$. This implies that $\Sigma$  can be obtained by gluing triangles in the Euclidean plane along their sides (see~\cite[Proposition~3.1]{thurstonshape}).
In particular, when all the cone angles are rational, the associated flat surface is an orbifold. It is obtained as a quotient of a closed
Euclidean surface without cone singularities by the action of a finite group of isometries.

\subsection{Stationary flat spacetimes with particles} We now construct  globally hyperbolic flat spacetimes with particles based on Euclidean surfaces with cone singularities.
The simplest and most  obvious example are static spacetimes, which are obtained as a direct product of the Euclidean surface with cone singularities with $\R$.

\begin{defi}[Static spacetimes with particles] \label{statspt}
Let $\Sigma$ be a closed Euclidean  surface with conical singularities $p_1$, ... $p_n$ of angles $\theta_1$, ..., $\theta_n$.  We denote by $ds_0^2$ the flat metric on the regular part $\Sigma^*$. Then the  product $M=\Sigma \times \R$ contains the
open subset $M^* = \Sigma^* \times \R$ and can be equipped with the Lorentzian metric $ds_0^2 - dt^2$, where the coordinate $t$ parametrises $\R$. This metric is locally flat,
and can be considered as the regular part of a flat singular metric on $M$ where the lines $\{ p_i \} \times \R$ are
spinless particles of cone angle $\theta_i$.
\end{defi}
Observe that these spacetimes are static: the vertical vector field $\partial_t$   is a Killing vector field, orthogonal to the level
sets of $t$. As the spacetime is static,
 $t$ is  a Cauchy time function and  the levels of $t$ are compact and hence complete.
This implies directly that  the static singular flat spacetime $M$ is globally hyperbolic.


To obtain a more interesting class of examples, we consider
 a closed $1$-form $\omega$ on $\Sigma^*$, where $\Sigma^*$ is the regular part of a singular flat Euclidean surface as in Definition \ref{statspt}. We consider again the direct product $\Sigma^*\times \R$ but now equipped with the metric \begin{align}\label{ommetric}ds_\omega = ds_0^2 - (\omega + dt)^2
 = ds_0^2 - \omega^2 -dt^2 - 2\omega dt\end{align}
 instead of $ds_0^2 - dt^2$.
 Note that this defines a flat Lorentzian metric
 on  $M^*=\Sigma^*\times \R$.  On any
 subset  $U \times \R$ where $U\subset \Sigma^*$ is contractible, the form $\omega$ is exact, i.~e. ~ given as the differential $\omega = df$ of a map $f \colon U \to \R$.
This implies that the metric $ds_\omega$ on $V\times\R$  is simply the pull-back of $ds_0^2$ under the diffeomorphism $(x,t) \mapsto (x, t + f(x))$ and hence a flat Lorentzian metric on $V\times\R$. Moreover,
this argument shows that $ds^2_\omega$ only depends, up to isometry, on the cohomology
class of $\omega$.

We now fix open pairwisely disjoint neighbourhoods $U_i\subset \Sigma$ around every singular point $p_i$  such that $U^*_i:=U_i \setminus \{ p_i \}$
is isometric to a ball in $\R^2_{\theta_i}$ centred at the singular point. In a suitable polar coordinate
system, the metric on $U_i$ then takes the form
$$ds_i^2=dr^2 + \left(\frac{\theta_i}{2\pi}\right)^2r^2d\alpha^2.$$

Denote by $\omega_i$ the closed $1$-form $d\alpha/2\pi$ in $U^*_i$. As it generates the first cohomology group,
the restriction of $\omega$ to $U^*_i$ is cohomologous to $\sigma_i\omega_i$, where $\sigma_i =\int_{\gamma_i} \omega$ and  $\gamma_i$
 is a loop in $U^*_i$ that makes one positive turn around $p_i$. Therefore, there exists a function $f_i \colon U^*_i \rightarrow \R$
such that $\omega = \sigma_i\omega_i +df_i$.
Let $f:\Sigma^*\rightarrow \R$ be a function whose restriction to $U_i^*$ coincides with $f_i$ for all $i\in\{1,...,n\}$. Such a function can be constructed  by means of bump functions $b_i: \sigma^*\rightarrow \R$, which satisfy $b_i|_{U_i^*}=1$ and $b_i|_{U_j^*}=0$ for all $i\neq j$. The function is then given by
$
f=\sum_{i=1}^n b_i f_i
$.
On every neighbourhood $U_i^*$, the one form $\omega'=\omega-df$ then
 coincides with $\sigma_i\omega_i$. This implies that the associated metric $ds_\omega^2$ defined as in \eqref{ommetric} and restricted to
  $V_i^*:=U^*_i \times \R$ takes the form
$$ds_\omega^2\vert_{V_i}=dr^2 + \left(\frac{\theta_i}{2\pi}\right)^2r^2\,d\alpha^2 - \left(\frac{\sigma_i}{2\pi}\right)^2\!d\alpha^2 - \frac 1 \pi \,d\alpha dt-dt^2$$

We recognise, up to a rescaling factor $1/\theta_i$ for the coordinate $t$, the metric \eqref{metric2} for a particle with spin $\sigma_i$.
Hence $(M, ds^2_\omega)$ is the regular part of a singular flat Lorentzian metric with particles on $\Sigma \times \R$ according to Definition \ref{mfdef}. We are therefore free to make the following definition.

\begin{defi} [Stationary spacetimes with particles]\label{omspt}

Let $\Sigma$ be a closed Euclidean  surface with conical singularities $p_1$, ... $p_n$ of angles $\theta_1$, ..., $\theta_n$ and  $ds_0^2$ the flat metric on its regular part $\Sigma^*$.
Let $\omega$ be a closed $1$-form on $\Sigma^*$. Then the flat stationary singular spacetime $(M,\omega)$ associated with $M=\Sigma\times\R$ and $\omega$ is the product $M^*=\Sigma^*\times \R$ equipped with the flat Lorentzian metric
\eqref{ommetric}.
\end{defi}

Note that we did not state that the stationary spacetimes defined in this way are globally hyperbolic
 in the sense of Definition \ref{def:spinGH}, as this is  not the case in general .
We will give a necessary and sufficient set of conditions for global hyperbolicity in Section \ref{propstat}.

\subsection{Geometrical properties of stationary flat singular spacetimes}\label{propstat}



 \subsubsection{Geometrical interpretation of the closed $1$-form}\label{sec:omegaconnection}
 Let $(M, ds^2_\omega)$ a singular flat spacetime as in Definition \ref{omspt}
defined by  a
Euclidean closed surface  $\Sigma$ with cone singularities and a closed $1$-form $\omega$ on the regular part $\Sigma^*$.

Then it is immediate from \eqref{ommetric} that the translations
along the $t$-coordinate  are isometries and the vertical vector field ${\partial_t}$ is a Killing vector field, which is timelike everywhere. The space of trajectories of this vector field is the space of vertical lines; it is naturally identified
with $\Sigma$. Actually, the projection on the first factor is a (trivial) $\R$-principal fibration $\nu \colon M \rightarrow \Sigma$.
The orthogonal complements of $\partial_t$ for $ds^2_\omega$ define a plane field transverse to this fibration restricted over $\Sigma^*$, i.~e.~a connection on this restricted $\R$-bundle.
More precisely, $\omega$ is the $1$-form relating this connection to the trivial product connection
on $\Sigma^* \times \R$ which arises  from the static metric $ds_0^2-dt^2$.  As the curvature $d\omega + \omega\wedge\omega$ vanishes, the connection associated with $ds^2_\omega$ is flat. This can also be deduced in a more elementary way  from the fact that horizontal planes characterised by the condition $t=$ constant are orthogonal to the Killing vector field ${\partial_t}$.

That the metric  $ds^2_\omega$ only depends on  the cohomology class of $\omega$
reflects the fact that the trivialisation of the $\R$-principal fibration
$\nu$ is unique only up to
 gauge transformations. The latter are  translations in the fibers and hence determined by a function $f \colon \Sigma' \rightarrow \R$. They result in a change of $\omega$ by a coboundary $\omega\mapsto \omega+df$.

As a direct application of Mayer-Vietoris sequence, one finds that a tuple of real numbers  $(\sigma_1, ... , \sigma_n)$ can be realised as the residues  of a closed $1$-form around the cone singularities $p_1,...,p_n$ on the surface $\Sigma$ if and only if the sum $\sigma_1 + ... + \sigma_n$ vanishes.  Moreover, once the residues $\sigma_1,...,\sigma_n$ are prescribed, the $1$-form $\omega$ is unique up to a closed $1$-form on the closed surface $\Sigma$. In particular, when $\Sigma$ is a sphere , then the residues $\sigma_1,...,\sigma_n$
determine the cohomology class $[\omega]$.

In the application to (2+1)-gravity, the residues $\sigma_1,...,\sigma_n$ correspond to the spins of $n$ massive particles associated with the singularities. If each particle has positive mass, then it follows from the discussion in Section \ref{euclcone} that  the resulting Euclidean surface $\Sigma$ is a sphere. In that case, the associated stationary spacetime is then determined uniquely  by the spins of the particles which must satisfy the condition $\sigma_1+\ldots+\sigma_n=0$.


 \subsubsection{Geodesics and Completeness}
  If the closed $1$-form  is exact, $\omega=df$,  which is always true locally,  the associated metric $ds^2_\omega$
is given as the pull-back of $ds_0^2$ under the diffeomorphism $(x,t) \mapsto (x, t + f(x))$. This allows one to give the following simple description of the geodesics in
 $(M, ds^2_\omega)$. A geodesic $g$ in  $(M, ds^2_\omega)$
is a path $$g: t \mapsto \left(c(t)\;,\; t_0+\lambda\int_{0}^t \omega(\dot{c}(s))ds\right),$$
where $\lambda\in\R$ and $t \mapsto c(t)$ is a geodesic in $\Sigma$ parametrised by arc length.
The geodesic $g$ is timelike if $\vert\lambda\vert > 1$, lightlike if $\vert\lambda\vert = 1$,
and spacelike if $\vert\lambda\vert < 1$.  Note that our notion of geodesic is not the one of a curve that minimises a length functional.
 In particular, we do not exclude that
the  geodesic $c$ in $\Sigma$  contains the singular points, which implies that  the geodesic $g$ can go through the singular lines.  The singular lines themselves can be considered as geodesics according to this definition.

Using this notion of geodesics, we obtain a natural definition of geodesic  completeness that allows us to directly deduce that the stationary spacetimes in Definition \ref{omspt} are complete.
We call a stationary spacetime $(M, ds^2_\omega)$  geodesically complete if inextensible geodesics are defined on the entire real line $\R$. This leads to the following proposition.

\begin{prop}
The spacetime $(M, ds^2_\omega)$  is geodesically complete.
\end{prop}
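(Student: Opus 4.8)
The plan is to reduce geodesic completeness of $(M,ds^2_\omega)$ to geodesic completeness of the base surface $\Sigma$, by means of the explicit description of geodesics recalled just above. According to that description, an inextensible geodesic $g$ of $(M,ds^2_\omega)$ is entirely determined by an inextensible unit-speed geodesic $c$ in $\Sigma$ together with the constants $t_0,\lambda\in\R$, through
$$g(s)=\Bigl(c(s)\,,\;t_0+\lambda\int_0^s\omega(\dot c(u))\,du\Bigr).$$
Since the parameter of $g$ is precisely the arc-length parameter of $c$, the domain of $g$ equals the domain of $c$, provided the integral in the second coordinate stays finite there. Extending $c$ extends $g$ and conversely, so $g$ is inextensible exactly when $c$ is. Hence it suffices to establish two facts: that every inextensible arc-length geodesic $c$ of $\Sigma$ is defined on all of $\R$, and that $s\mapsto\int_0^s\omega(\dot c(u))\,du$ is finite on every bounded interval, hence on all of $\R$.

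First I would prove completeness of the base. Let $c\colon(a,b)\to\Sigma$ be an inextensible unit-speed geodesic and suppose, toward a contradiction, that $b<\infty$. As $c$ has unit speed it is $1$-Lipschitz for the distance induced by the flat cone metric, and $\Sigma$ is compact, hence complete as a metric space; therefore $c(s)$ converges to some $q\in\Sigma$ as $s\to b^-$. If $q$ is a regular point it has a flat Euclidean neighbourhood in which $c$ is an ordinary straight segment, so $c$ continues past $b$, contradicting inextensibility. If $q$ is a cone point $p_i$, then (a geodesic can reach $p_i$ only radially, the flat-chart straight lines with positive impact parameter having positive minimal distance to $p_i$) $c$ arrives along a radial ray, and by the convention adopted in Section~\ref{sub:geodesic}, under which geodesics are permitted to pass through the singular points, it continues beyond $q$ along an outgoing radial ray, again contradicting inextensibility. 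Thus $b=+\infty$, and the reversed geodesic gives $a=-\infty$, so every inextensible geodesic of $\Sigma$ is defined on all of $\R$.

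Next I would control the $t$-coordinate, i.e.\ show $\int_0^s\omega(\dot c(u))\,du$ is finite for finite $s$. On such an interval $c$ has finite length and therefore meets only finitely many cone points and has bounded total angular variation about each of them. Decomposing $c|_{[0,s]}$ into the flat straight segments between successive encounters with singular points, and using the local normal form $\omega=\sigma_i\,\omega_i+df_i$ near each $p_i$ with $\omega_i=d\alpha/2\pi$ and $f_i$ smooth, the contribution of each segment is $\int\omega$ over a path in a flat chart, bounded by the variation of a local primitive; the contribution of a radial passage through $p_i$ is the finite jump $\tfrac{\sigma_i}{2\pi}\,\Delta\alpha$ plus the bounded variation of $f_i$. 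Summing the finitely many bounded terms shows the integral is finite, so the second coordinate of $g$ is defined for all $s\in\R$.

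Combining the two points, every inextensible geodesic $g$ of $(M,ds^2_\omega)$ is defined on the whole real line, which is the assertion. The main obstacle is the analysis at the cone singularities. One must ensure both that the base geodesic $c$ genuinely continues through a cone point --- this is exactly why the nonstandard notion of geodesic of Section~\ref{sub:geodesic} is needed, and it is the place where uniqueness of the continuation (though not its existence) may fail --- and that $\omega$, whose pointwise norm blows up like $1/r$ at each $p_i$, nonetheless integrates to a finite quantity along $c$. The latter works precisely because the singular part of $\omega$ is a multiple of $d\alpha$, so its integral records only the bounded angular variation rather than the divergent pointwise size.
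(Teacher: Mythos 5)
Your proof is correct and follows essentially the same route as the paper's: geodesic completeness of the compact base $(\Sigma, ds_0^2)$ (with the convention that geodesics may traverse the cone points) combined with the explicit formula $g(s)=(c(s),\,t_0+\lambda\int_0^s\omega(\dot c(u))\,du)$ for geodesics of $ds^2_\omega$. You supply considerably more detail than the paper's two-sentence argument --- in particular the verification that the $\omega$-integral remains finite across cone points because the singular part of $\omega$ is a multiple of $d\alpha$ --- but the underlying reduction is the same.
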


\begin{proof} As our notion of geodesics allows them to traverse the singularities,
the underlying Euclidean surface $(\Sigma, ds^2_0)$ is geodesically complete, since it is compact.  As the geodesics of $(M, ds^2_\omega)$ take the form
 $g: t \mapsto (c(t), t_0+\lambda\int_{0}^t \omega(\dot{c}(s))ds)$
where $\lambda\in\R$ and $t \mapsto c(t)$ is a geodesic in $\Sigma$ parametrised by arc length, this also holds for the geodesics in $(M, ds^2_\omega)$.
\end{proof}

We are now read to investigate under which conditions the stationary flat spacetimes with particles are globally hyperbolic.
The answer to this question is provided by the following proposition.

\begin{prop}
\label{pro:ghspin}
The flat stationary singular spacetime $(M, ds^2_\omega)$ is globally hyperbolic if and only if  the following three conditions are satisfied:
\begin{enumerate}

\item For every singular point $p_i$, the Euclidean ball centred at  $p_i$ and of  CTC radius
$|r_i|= \frac{\vert \sigma_i \vert}{\theta_i}$ is embedded, i.~e.~the injectivity radius at $p_i$ for the singular Euclidean metric
is greater than $r_i$.

\item For every pair $(p_i, p_j)$ of singular points the sum of their CTC radii is greater than their Euclidean distance  $|r_i| + |r_j| < d(p_i, p_j)$.

\item Let $S_0$ be the set of points  $p\in \Sigma$ for which the Euclidean distance $d(p,p_i)$ from each singular point $p_i$
is strictly greater than the CTC radius $|r_i|$.
Then the absolute value of the integral of $\omega$ along any  closed loop $\gamma$ in $S_0$ is strictly smaller than the Euclidean length $\ell_0(\gamma)$ of $\gamma$
$$\left\vert \int_{\gamma} \omega \right\vert < \ell_0(\gamma).$$
\end{enumerate}
\end{prop}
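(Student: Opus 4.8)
The plan is to reduce the whole statement to a single analytic condition on $\omega$ relative to the flat metric $ds_0^2$ — the existence of a cohomologous form of pointwise norm $<1$ — and to track how this condition, together with the two metric conditions near the singular points, produces the CTC cylinders and the Cauchy surface required by Definition \ref{def:spinGH}. First I would record the dictionary coming from \eqref{ommetric}: a tangent vector $(v,a)$ at a point of $M^*=\Sigma^*\times\R$ is future causal precisely when $a\geq |v|_0-\omega(v)$, so that along a future causal curve $s\mapsto(c(s),t(s))$ one has $\dot t\geq |\dot c|_0-\omega(\dot c)$. The vertical field $\partial_t$ is a complete timelike Killing field, and near each $p_i$ the metric is, up to the rescaling $t\mapsto t/\theta_i$, the one--particle metric \eqref{metric2} with spin $\sigma_i$; hence the CTC region around $p_i$ is exactly $\{r<|r_i|\}$, and its boundary circle of radius $|r_i|$ has Euclidean length $\theta_i|r_i|=|\sigma_i|=|\int\omega|$, i.e.\ it is a null circle. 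Thus condition (1) (the ball of radius $|r_i|$ embeds) and condition (2) (these balls are pairwise disjoint) are exactly what is needed for the regions $V_i=\{r<|r_i|\}$ to be embedded, pairwise disjoint CTC cylinders with complement $M^*=S_0\times\R$.

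For the ``only if'' direction, suppose $M$ is globally hyperbolic. Disjointness and embeddedness of the $V_i$ give conditions (1) and (2) directly, with strictness forced by the requirement that the interior region carry a spacelike Cauchy surface with null--circle boundary (which fails if two CTC surfaces are tangent or a ball self--touches). Since $\partial_t$ is a complete timelike Killing field, every vertical line $\{x\}\times\R$ is an inextendible timelike curve and so meets the Cauchy surface $S$ exactly once; hence $S$ is a graph $t=u(x)$ over $S_0$, spacelike in the interior. Spacelikeness reads $|(\omega+du)(v)|<|v|_0$ for all $v\neq0$, i.e.\ $\|\omega+du\|_0<1$ pointwise; integrating along any loop $\gamma\subset S_0$ and using $\int_\gamma du=0$ gives, by compactness of $\gamma$, the strict inequality $|\int_\gamma\omega|=|\int_\gamma(\omega+du)|<\ell_0(\gamma)$, which is condition (3).

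For the ``if'' direction, conditions (1) and (2) provide the $V_i$ and the interior region $M^*=S_0\times\R$, and I would establish global hyperbolicity of $M^*$ through Proposition \ref{pro:critereGH}. Its first hypothesis follows at once from condition (3): a closed causal curve is future directed throughout, so $0=\oint\dot t\geq\ell_0(c)-\int_c\omega$ forces $\int_c\omega\geq\ell_0(c)$ for its nonconstant projection $c$, contradicting condition (3) unless $c$ lies on a boundary circle, where equality holds and the curve is a null circle. For the second hypothesis I would produce a smooth $\phi\colon S_0\to\R$ with $\|\omega-d\phi\|_0<1$; then $T=t+\phi$ satisfies $\dot T\geq(1-\|\omega-d\phi\|_0)\,|\dot c|_0\geq0$ along future causal curves, so for $p,q\in M^*$ any $z=(x,t)\in J^+(p)\cap J^-(q)$ has $T(z)$ bounded and $d_0(x_p,x)\leq (T(q)-T(p))/(1-\|\omega-d\phi\|_0)$ bounded; since $S_0$ is compact this confines $z$ to a compact set, so $J^+(p)\cap J^-(q)$ is compact. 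Proposition \ref{pro:critereGH} then yields a Cauchy surface which, being a graph over the compact $S_0$ with boundary on the null circles, is compact and satisfies Definition \ref{def:spinGH}; in particular $M$ is even spatially compact.

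The crux, and the step I expect to be the main obstacle, is the construction of $\phi$ with $\|\omega-d\phi\|_0<1$ from the \emph{pointwise} loop inequalities of condition (3): one must upgrade ``$|\int_\gamma\omega|<\ell_0(\gamma)$ for each loop'' to the \emph{uniform} statement $\sup_\gamma |\int_\gamma\omega|/\ell_0(\gamma)<1$, which by the duality between loop integrals and the comass of cohomologous representatives (stable--norm duality on the compact flat surface $S_0$) is equivalent to the existence of the desired $\phi$. This uniformity is where compactness must enter twice: compactness of $S_0$ to realise the relevant lengths by minimising geodesics, and compactness of the unit sphere of the stable norm on $H_1(S_0;\R)$ together with continuity of $h\mapsto\langle[\omega],h\rangle$ to turn the per--direction strict inequalities into a strict maximum. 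The delicate point is the borderline behaviour in irrational homology directions, where the supremum over loops can a priori approach $1$ without being attained; controlling it requires the geometric realisation of the stable norm on the flat surface rather than the per--loop inequality alone, and it is precisely here that the flat geometry of $S_0$ must be used.
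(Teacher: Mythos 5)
Your ``only if'' direction and the overall reduction of the ``if'' direction to Proposition \ref{pro:critereGH} (conditions (1)--(2) producing the disjoint embedded CTC cylinders, condition (3) ruling out closed causal curves other than the null circles) coincide with the paper's argument. The gap lies in your verification of the second hypothesis of Proposition \ref{pro:critereGH}. You propose to construct $\phi\colon S_0\to\R$ with $\|\omega-d\phi\|_0<1$ and then to divide by the uniform constant $1-\|\omega-d\phi\|_0>0$. Such a $\phi$ does not exist as soon as some spin $\sigma_i$ is nonzero: for the circle $\gamma_\epsilon$ of radius $|r_i|+\epsilon$ about $p_i$ one has $\int_{\gamma_\epsilon}\omega=\sigma_i$ while $\ell_0(\gamma_\epsilon)=\theta_i(|r_i|+\epsilon)\to\theta_i|r_i|=|\sigma_i|$, so $\sup_{\gamma\subset S_0}|\int_\gamma\omega|/\ell_0(\gamma)=1$; on the other hand $|\int_\gamma\omega|=|\int_\gamma(\omega-d\phi)|\leq\bigl(\sup_{S_0}\|\omega-d\phi\|_0\bigr)\,\ell_0(\gamma)$, so any calibrating $\phi$ would force that supremum to be strictly below $1$, a contradiction. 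This is precisely the degeneration the paper records immediately after the statement of the proposition: the inequality of condition (3) becomes an equality on $\partial S_0$, reflecting the fact that the Cauchy surfaces have lightlike rather than spacelike boundary. The obstruction is therefore not, as your final paragraph suggests, a subtlety about irrational homology directions and attainment of the stable norm; it sits on the boundary circles and cannot be removed.

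A weakened calibration with $\sup\|\omega-d\phi\|_0\leq 1$ would still make $T=t+\phi$ nondecreasing along future causal curves and hence bound $t$ on causal diamonds (your distance estimate is in any case redundant, since $\overline{S}_0$ is already compact), but producing even that requires a mass--comass duality argument on a surface with boundary that you have not supplied, and closedness of $J^{+}(p)\cap J^{-}(q)$ would still need a separate argument --- boundedness alone is not compactness. The paper avoids the calibration entirely: for each $q=(x,t_0)$ and each $y\in\overline{S}_0$ it exhibits the explicit future lightlike curve $t\mapsto\bigl(c(t),\,t_0+\int_a^t(ds_0(\dot c)-\omega(\dot c))\,ds\bigr)$ over an arbitrary path $c$ from $x$ to $y$, uses the absence of closed causal curves over $S_0$ to separate the resulting upper and lower bounds, and obtains semicontinuous functions $f^{\pm}_q$ whose graphs bound $J^{\pm}(q)$; compactness of $J^{+}(p)\cap J^{-}(q)$ then follows from compactness of $\overline{S}_0$ with no uniform spacelikeness anywhere. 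You should replace your calibration step by an argument of this kind.
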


The first two conditions are immediately recognised as  equivalent to the first condition in Definition \ref{def:spinGH}, which states that the CTC regions associated to particles are disjoint CTC cylinders. The third condition implies that
 if the residues of $\omega$
are sufficiently small, then $(M, ds^2_\omega)$ is globally hyperbolic in the sense of Definition \ref{def:spinGH}.
Note that if
 $\gamma_i$ is the circle of radius $|r_i|$ and centre $p_i$, we have the equality
$$\left \vert \int_{\gamma_i} \omega \right \vert = \vert \sigma_i \vert = \theta_i|r_i| = \ell_0(\gamma_i).$$
The inequality in the third condition must therefore become an equality at the boundary of $S_0$.

\begin{proof}

\textit{The conditions are necessary:} Assume that $(M, ds^2_\omega)$ is globally hyperbolic in the sense of Definition \ref{def:spinGH}. Then the CTC regions around the
particles must be pairwise disjoint, and every particle must be contained in a CTC cylinder. It is clear that these properties imply
items $(1)$ and $(2)$.
Moreover, by definition of global hyperbolicity,  the interior region of $M$ must contain a Cauchy surface $S$, which intersects every vertical line in exactly one point.
This implies that the Cauchy surface $S$ is the
 graph of a function $f: \overline{S}_0 \rightarrow \R$,
where $\overline{S}_0$ is the closure of $S_0$.
As this graph is spacelike, we have for every non-vanishing tangent vector $v$ of $S_0$
$$0 < ds^2_0(v) - (\omega(v) + df(v))^2$$
and therefore
$$ - ds_0(v) < \omega(v) + df(v) < ds_0(v).$$
The inequality in the condition $(3)$ is  then obtained directly by integration along any closed loop $\gamma$ in $S_0$, since the integral of $df$ along
such a loop vanishes.

\bigskip
\textit{The conditions are sufficient:} As  already observed, the first two conditions equivalent to the statement  that every particle is surrounded by a CTC
cylinder, and that these cylinders are disjoint.  The inequality in condition $(3)$ means precisely that there are no closed causal curves (CCCs)  in $S_0 \times \RR$, hence the only CCC in $M^*$ are the null circles in the CTC surface. It remains to construct a Cauchy surface for $M^*$.

Let now $q=(x,t_0)$ be a point in the closure $\overline{M^*}$ of $M^*$. For every $y$ in $\overline{S}_0$, select a path $c \colon [a,b] \rightarrow \overline{S}_0$
such that $c(a)=x$ and $c(b)=y$. Then the curve $t \mapsto (c(t), t_0+\int_{[a,t]} ds_0(\dot{c}(s)) - \omega(\dot{c}(s)) ds )$ is a future oriented
lightlike curve in $\overline{M^*}$ joining $q$ to a point $q(y)$ in the vertical line above $y$. Hence, there exists a $t\in\R$ such that $(y,t)$ lies
in the causal future $J^+(q)$ of $q$. Similarly, there exists a $t'\in\R$ such that $(y, t')$ lies in $J^-(q)$.

As there is no
CCC above $S_0$, we must have $t' < t$, except if $x$, $y$ lies in the same boundary component of $S_0$, in which case we must have $t=t'$. (Otherwise
we could construct a CTC in $M^*$).
In particular, there is an upper bound for $t'$, and a lower bound for $t$. In other words,
for any $q\in \overline{M^*}$ there are two maps $f^\pm_q \colon \overline{S}_0 \rightarrow \R$ such that  a point $p=(y,t)$ in $S_0 \times \R$
lies in $J^+(q)$ (respectively $J^-(q)$)
if and only if $t \geq f^+_p(y)$ (respectively $t \leq f^-_q(y)$).

Observe that the future $I^+(q)$ of $q$ in $\overline{M^*}$ is the set of points $(y, t)$ with $t > f^+_q(y)$: by adding small $t$-components to the lightlike curves
considered above one can obtain lightlike curves joining $q$ to $(y, t)$ for any $t > f^+_q(p)$. As $I^+(p)$ is open,
it follows that $f^+_q$ is upper semi-continuous. Similarly, $f^-_q$ is lower semi-continuous. As $ \overline{S}_0$ is compact,
the set of points $(x,t)$ such that $f^-_p(x) \leq t \leq f^+_q(x)$ is compact. But this set is precisely the intersection $J^-(q) \cap J^+(p)$.
Proposition~\eqref{pro:critereGH} then implies the existence of a Cauchy surface and hence the global hyperbolicity of $M$.
\end{proof}

In addition to establishing criteria for the global hyperbolicity of the stationary singular spacetime $(M, ds^2_\omega)$ this proposition provides a characterisation of the $1$-forms associated with
globally hyperbolic spacetimes. We obtain the following corollary.

\begin{cor}
\label{cor:omega1}
Let $(M, ds^2_\omega)$ be the singular flat spacetime associated to a closed singular Euclidean surface $\Sigma$ and a
closed $1$-form $\omega$ on $\Sigma^*$. If $(M, ds^2_\omega)$ is globally hyperbolic, then $\omega$ is cohomologous to a $1$-form
$\omega_0$ such that
at every point $x$ of $S_0$, the operator norm of $\omega_0,$ as a linear form on $T_xS_0$, and with respect to the norm $ds_0$ is  less than one.
\end{cor}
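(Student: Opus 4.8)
The plan is to read off the desired form $\omega_0$ directly from a spacelike Cauchy surface of $M^*$ — precisely the object that already appears in the necessity half of the proof of Proposition~\ref{pro:ghspin}. By hypothesis $(M,ds^2_\omega)$ is globally hyperbolic, so by Definition~\ref{def:spinGH} the interior region $M^*=S_0\times\R$ carries a Cauchy surface $S$ whose interior is spacelike. Since $\partial_t$ is a timelike Killing field, each vertical line $\{x\}\times\R$ with $x\in S_0$ is an inextendible timelike curve in $M^*$ and therefore meets $S$ in exactly one point. Hence the interior of $S$ projects bijectively onto $S_0$ and is the graph $\{\,t=f(x)\,\}$ of a function $f\colon S_0\to\R$.

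The core of the argument is the pointwise translation of spacelikeness. A tangent vector to the graph at $(x,f(x))$ has the form $v+df_x(v)\,\partial_t$ with $v\in T_xS_0$, and from $ds^2_\omega=ds_0^2-(\omega+dt)^2$ one computes
$$ds^2_\omega\big(v+df_x(v)\,\partial_t\big)=ds_0^2(v)-\big(\omega(v)+df_x(v)\big)^2.$$
Spacelikeness forces this to be positive for every nonzero $v$, i.e. $\big|(\omega+df)(v)\big|<ds_0(v)$ for all $0\neq v\in T_xS_0$. Setting $\omega_0:=\omega+df$, this is exactly the statement that the linear form $\omega_0$ has $ds_0$-operator norm strictly below $1$ at $x$: the supremum of $|\omega_0(v)|/ds_0(v)$ over the compact unit circle of $T_xS_0$ is attained and hence $<1$. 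Because $df$ is exact, $\omega_0$ is cohomologous to $\omega$; extending $f$ from $\overline{S}_0$ to all of $\Sigma^*$ (which leaves its values on $S_0$ unchanged) makes $\omega_0$ a globally defined $1$-form on $\Sigma^*$ cohomologous to $\omega$, as claimed.

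The main point requiring care is regularity: for $\omega_0=\omega+df$ to be a genuine (continuous) $1$-form one needs $S$, and hence $f$, to be at least $C^1$. I would therefore first replace the given Cauchy surface by a smooth spacelike one — either by invoking the existence of smooth spacelike Cauchy surfaces in globally hyperbolic spacetimes, or by noting that the surfaces $S_\epsilon$ produced in the proof of Proposition~\ref{pro:critereGH} can be chosen smooth. Two further remarks are worth recording but present no real difficulty: first, the bound degenerates to an equality on the boundary $\partial S_0=\bigcup_i\{\,d(\cdot,p_i)=|r_i|\,\}$, where the graph meets the null circles of the CTC surfaces and becomes lightlike — this matches the equality $|\int_{\gamma_i}\omega|=\ell_0(\gamma_i)$ noted after Proposition~\ref{pro:ghspin} and explains why the strict inequality is asserted only on the open set $S_0$; second, the passage from $\overline{S}_0$ to $\Sigma^*$ is legitimate because $\Sigma^*$ deformation retracts onto $\overline{S}_0$, so the cohomology class is unaffected.
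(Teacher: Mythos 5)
Your proposal is correct and follows essentially the same route as the paper: both take $\omega_0=\omega+df$ where $f$ is the graph function of a spacelike Cauchy surface over $S_0$, and extract the pointwise operator-norm bound from the spacelikeness condition $ds_0^2(v)-(\omega(v)+df(v))^2>0$ (the paper phrases this via an integrated inequality over paths and then differentiates, but the content is identical to your direct pointwise computation, which already appears verbatim in the necessity half of Proposition~\ref{pro:ghspin}). Your added remarks on regularity of $f$ and on extending from $\overline{S}_0$ to $\Sigma^*$ address points the paper leaves implicit.
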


\begin{proof}
Consider a smooth map $f_0 \colon \overline{S}_0 \rightarrow \RR$ whose graph is a Cauchy surface in $M^* = \overline{S}_0 \times \R$,
and take $\omega_0 = \omega + df_0$. The fact that the graph of $f_0$ is spacelike is equivalent to the condition  that for any path $\gamma \colon [a,b] \rightarrow S_0$ (not necessarily a loop!)
the following inequality holds$$ \int_\gamma \omega_0< \ell_0(\gamma)$$
As this inequality holds for any path $\gamma:[a,b]\rightarrow S_0$, we obtain an infinitesimal version by differentiation: the evaluation of $\omega$ on any
tangent vector $v$ is bounded by the $ds_0$-norm of $v$. The corollary follows.
\end{proof}


\subsection{Classification of stationary flat singular spacetimes}
We already observed that the spacetimes $(M, ds^2_\omega)$ are stationary. We will now show that there is a  converse statement that allows one to relate any stationary flat spacetime with a compact spatial surface and  particles to a spacetime as in Definition \ref{omspt}.

\begin{prop}
Let $M$ be a flat globally hyperbolic spatially compact spacetime with particles. Assume that $M$ is stationary,
i.~e.~that the regular region $M^*$ admits a Killing vector field $X$ which is timelike everywhere. Then $M$ embeds isometrically in the flat singular
spacetime $(M, ds^2_\omega)$ associated to a singular Euclidean surface $\Sigma$ and a closed $1$-form $\omega$ on the regular
part $\Sigma^*$ of $\Sigma$.
\end{prop}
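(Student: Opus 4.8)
The plan is to reverse-engineer the construction of Definition~\ref{omspt}: given the timelike Killing field $X$ on $M^*$, I will use it to produce the Euclidean surface $\Sigma$, the closed $1$-form $\omega$, and an isometric embedding onto the regular part of $(M,ds^2_\omega)$. First I would consider the space of trajectories of $X$. Since $X$ is a complete timelike Killing field (completeness coming from the stationarity together with global hyperbolicity and spatial compactness), its flow generates an $\R$-action, or at least a local $\R$-action, on $M^*$, and the orbit space $\Sigma^* := M^*/X$ carries a natural quotient metric: because $X$ is Killing and timelike, the restriction of the Lorentzian metric to the $X$-orthogonal distribution is positive definite and $X$-invariant, so it descends to a Riemannian metric $ds_0^2$ on $\Sigma^*$. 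The first task is therefore to verify that $\Sigma^*$ is a genuine manifold and that this quotient Riemannian metric is flat; flatness should follow from the fact that $M^*$ is flat and the $X$-orthogonal distribution is integrable up to the obstruction measured by a $1$-form, which is exactly how $\omega$ will appear.

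Next I would analyse the behaviour near the singular lines. By the first condition of Definition~\ref{def:spinGH}, each neighbourhood $V_i$ is a CTC cylinder in some $M_{\theta_i,\sigma_i}$, whose geometry I understand explicitly from Section~\ref{singlepart}. The Killing field $X$ on $M^*$ must be compatible with the local model, so near each $\partial V_i$ it should agree (up to the isometries classified in Section~\ref{killvecsec}) with the stationary Killing field $\partial_t$ of the one-particle model. This lets me show that the quotient $\Sigma^*$ acquires a cone singularity of angle $\theta_i$ at the point $p_i$ corresponding to $d_i$, so that $\Sigma := \Sigma^* \cup \{p_1,\dots,p_n\}$ is a closed Euclidean surface with cone singularities in the sense of the definition preceding Troyanov's theorem. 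Spatial compactness of $M$ guarantees that $\Sigma$ is compact.

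The $1$-form $\omega$ is constructed as the connection $1$-form of the principal $\R$-bundle $\nu\colon M^* \to \Sigma^*$ defined by the $X$-flow, exactly as described in Section~\ref{sec:omegaconnection}: the $X$-orthogonal distribution is a connection on this $\R$-bundle, and $\omega$ is the $1$-form measuring its deviation from the trivial product connection associated with the static metric. Since $M^*$ is flat, this connection is flat, so $d\omega = 0$ and $\omega$ is closed. The residues $\sigma_i = \int_{\gamma_i}\omega$ around the singular points are forced to coincide with the spins of the local models $M_{\theta_i,\sigma_i}$, by comparing with the explicit form~\eqref{metric2} near each $\partial V_i$. Trivialising the bundle via a choice of section then identifies $(M^*, g|_{M^*})$ with $(\Sigma^*\times\R,\, ds_0^2 - (\omega+dt)^2)$ isometrically, which is precisely the metric~\eqref{ommetric}; this gives the desired isometric embedding of $M$ into $(M,ds^2_\omega)$.

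The main obstacle I anticipate is the global triviality of the $\R$-bundle $\nu$ and the completeness of the $X$-flow, i.e.\ showing that the orbit space is a manifold and that trajectories of $X$ are globally parametrised by all of $\R$ rather than closing up or being incomplete. A priori $X$ could have closed orbits, which would prevent $\Sigma^*$ from being Hausdorff or $\nu$ from being a trivial $\R$-fibration; ruling this out is where global hyperbolicity of the interior region genuinely enters. The Cauchy surface $S$ provided by the definition of global hyperbolicity should resolve this: since $X$ is timelike and $S$ is spacelike and met by every inextendible causal curve, each $X$-orbit crosses $S$ exactly once, which simultaneously shows the orbits are complete copies of $\R$, that $\Sigma^* \cong S^\circ$ is a manifold, and that the bundle $\nu$ is trivialised by $S$. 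I would therefore prove the trivialisation and completeness first, using $S$ as a global slice, and only afterwards carry out the metric computation, which is then the routine identification with~\eqref{ommetric}.
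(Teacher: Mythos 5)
Your overall architecture (quotient by the flow of $X$, cone singularities from the local models, $\omega$ as the connection $1$-form of the $\R$-bundle, trivialisation via the Cauchy surface) matches the paper's. But there is a genuine gap at the very first step: the claim that the quotient metric on $\Sigma^*=M^*/X$ is flat, and that the $X$-orthogonal connection is flat so that $\omega$ is closed, does \emph{not} follow from flatness of $M^*$ alone. It fails already for a screw motion in Minkowski space: take $X=\partial_t+\lambda(x\partial_y-y\partial_x)$ on the solid cylinder $x^2+y^2<1/\lambda^2$ where it is timelike; the quotient metric is $dr^2+\frac{r^2}{1-\lambda^2r^2}\,d\phi^2$, which has non-vanishing curvature. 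What makes the quotient flat in the target model \eqref{ommetric} is that the Killing field there is \emph{parallel} (it develops to a translation $\partial_t$ of $\R^{1,2}$), not merely timelike Killing. Establishing this for the given $X$ is the actual content of the paper's proof, and you cannot get it for free.

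The paper does this through the developing map: the lift $\tilde X$ equals $\cD^*X_0$ for a global Killing field $X_0$ of $\R^{1,2}$, and equivariance with respect to the local holonomy around a singular line forces $X_0=\alpha\,\partial_\theta+\beta\,\partial_t$ in the coordinates adapted to that line. Crucially, $\alpha$ need not vanish — your assertion that $X$ ``should agree (up to the isometries of Section~\ref{killvecsec}) with $\partial_t$'' is exactly what must be proved, and as stated it is false, since the local isometry algebra is spanned by $\partial_\alpha$ \emph{and} $\partial_\tau$ and $X$ can a priori be any timelike combination. The paper handles the case $\alpha\neq 0$ by observing that $X_0$ then has a unique invariant timelike line $\Delta_0$, which must be preserved by the whole holonomy group $\rho(\Gamma)$; hence $\rho(\Gamma)$ lies in the rotations about $\Delta_0$, the field $\cD^*\partial_t$ descends to $M^*$, and one may \emph{replace} $X$ by this new timelike Killing field (the proposition only asserts the existence of some embedding, not one adapted to the original $X$). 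Only after this reduction to $X_0=\partial_t$ does the holonomy preserve vertical lines, does the projected developing map $\pi_0\circ\cD$ define the singular Euclidean structure on $\Sigma$, and does the orthogonal plane field become a flat connection with closed $1$-form $\omega$. Your proposal needs this holonomy argument (or an equivalent) inserted before the quotient construction; the remaining steps, including the use of the Cauchy surface to trivialise the fibration over the interior region and the extension of the fibration across the CTC cylinders via the explicit local model, are then as you describe.
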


\begin{proof}
Denote by $M^*$ the regular part of $M$ given as the complement of the singular lines, and by $M^*_0$
the interior region that is the complement of the CTC regions. Let $S$ be a compact Cauchy surface in $M^*_0$.
We denote by $\Mw$, $\Mw^*$, $\Mw^*_0$, $\Sw$, respectively,  the universal coverings of these manifolds.
Then there are natural inclusions $\Sw \subset \Mw^*_0 \subset \Mw^*$.

By definition, the regular part $M^*$ is locally modelled on  Minkowski space $\R^{1,2}$.
Let $\cD \colon \Mw^* \rightarrow \R^{1,2}$ be the associated developing map, and let
$\rho \colon \Gamma \rightarrow \op{SO}_0(1,2) \ltimes \R^{1,2}$ be the holonomy representation,
where $\Gamma$ is the fundamental group of $M^*$.
The Killing vector field $X$ induces a local flow $\phi^t$ on $M^*$. This flow is isometric and maps  closed timelike curves to closed timelike curves.
It  follows that
$\phi^t$ preserves every CTC region $U_i$ around the singular lines $d_i$. Let $\tilde{\phi}^t$ be the lift of
of $\phi^t$ to $\Mw^*$, which is  generated by the lift $\tilde{X}$ of $X$. Then the orbit space of $\tilde{\phi}^t$ is naturally identified with $\Sw$.

It is well-known that local isometries on open subsets of $\R^{1,2}$ extend to isometries of  $\R^{1,2}$. This implies that the lift $\tilde{X}$ is the pull-back $\cD_*X_0$ of a Killing
vector field $X_0$ on $\R^{1,2}$.
Let $\widetilde{U}_1$ be a lift of  the region $U_1$ around $\tilde{d}_1$ to $\Mw^*$.
Then there exists an isometry of $\R^{2,1}$ whose composition with the developing map $\cD$ maps $\widetilde{U}_1$
into cylinder $\{ (r,\theta, t)\,: 0\leq \theta\leq 2\pi, r^2 \leq r_1^2, a < t < b \}$ in Minkowski space. Note that we admit also cylinders with  $a$ and/or $b$ are infinite. It follows that the flow associated with $X_0$  must preserve this cylinder and, consequently,  $X_0$ is a linear combination $\alpha {\partial_\theta} + \beta{\partial_t}$.

Suppose $\alpha \neq 0$. Then, the only  timelike line invariant under the flow of $X_0$ is the vertical line $\Delta_0$ characterised by the condition $r=0$. But for every
$\gamma$ in $\Gamma$, $\rho(\gamma)(\Delta_0)$ is also a timelike $X_0$-invariant line: $\Delta_0$ is therefore
preserved by the entire holonomy group $\rho(\Gamma)$, which therefore is contained in the subgroup of
rotations with axis $\Delta_0$. In particular, the holonomy preserves the vertical vector field ${\partial_t}$.
The pull-back $\cD^*({\partial_t})$ is a Killing vector field, which is timelike everywhere. Hence
we can  replace $X_0$ by this Killing vector field, since it satisfies the same hypothesis.

We can therefore assume $\alpha=0$ and  identify $X_0$ with the pull-back $\cD^*({\partial_t})$ up to an homothety.
This implies that every element of $\rho(\Gamma)$ preserves the vertical lines in $\R^{1,2}$. Elements of $\rho(\Gamma)$  must therefore be given as the  composition of a rotation
around a vertical axis and a translation.
Let $\R^2$ be the horizontal plane characterised by the condition $t=0$ and let  $\pi_0 \colon \R^{1,2} \rightarrow \R^2$ be the
orthogonal projection. This projection is equivariant under the action of  $\Gamma$ on $\R^{1,2}$
through the holonomy representation $\rho$ and its isometric action on $\R^2$. More precisely, for every $\gamma\in\Gamma$, we have $\pi \circ \rho(\gamma) = \bar{\rho}(\gamma) \circ \pi$ where $\bar{\rho}(\gamma)$ is the isometry
of $\R^{1,2}$ which has the same linear part as $\rho(\gamma)$ (some rotation) and whose translation part is the
projection of the translation part of $\rho(\gamma)$.

Then, the restriction of $\overline{\cD} = \pi_0 \circ \cD$ to $\widetilde{S}$ and $\bar{\rho} \colon \Gamma \rightarrow \op{Iso}(\R^2)$ are
the developing map and holonomy representation of a Euclidean structure on $S$.
By investigating the behaviour of of this maps in a neighbourhood of each singular line $d_i$, one finds that this Euclidean
structure extends to a singular Euclidean metric on a closed surface $\Sigma$ that is obtained from $S$ by adding to the boundary of $S$ round disks with one cone-angle singularity.

The maximal trajectories of $\phi^t$ are
timelike inextendible curves in $M^*$.
 As it is a fiber of a fibration $\nu \colon M_0^* \rightarrow S$, every trajectory of $\phi^t$ in the interior region $M^*_0$ intersects $S$ in exactly one point.
  By  considering this fibration in every CTC region, one finds that it
extends naturally on the regular region $M^*$ to a fibration $\nu \colon M^* \rightarrow \Sigma^*$, whose fibers are trajectories
of $\phi^t$. In particular, the fibers are homeomorphic to $\R$: there is a global section $\sigma \colon \Sigma^*\rightarrow M^*$.
For every $p$ in $M'_0$, let $\Psi(x)$ be the pair $(\nu(p), t(p))$, where $t(p)$ is the unique real number $t$ such that $p = \phi^t(\sigma(\nu(x))$:
it defines an embedding $\Psi \colon M'_0 \rightarrow \Sigma' \times \R$ such that $\nu = p_1 \circ \Psi$, where $p_1$ is the projection
on the first factor.

The orthogonal complement to $X_0$ with respect to the flat metric defines a plane field on $M^*$ which is transverse to $\nu$ and invariant under
vertical translations. It therefore extends to a connection on the trivial  $\R$-principal  bundle $p_2 \colon \Sigma^*\times \R \rightarrow \Sigma^*$, which is
invariant under $\R$-translations on the fiber. This connection is flat, since in $\R^{1,2}$ the plane field orthogonal to $\frac\partial{\partial t}$
is integrable.

Let $\omega$ be the $1$-form relating this connection to the trivial connection on $\Sigma^* \times \R$. As both connections are flat, it is
 a closed $1$-form. This implies that the bundle embedding $\Psi \colon M^* \rightarrow \Sigma^* \times \R$ maps
the plane field orthogonal to $X_0$ on the plane field in $\Sigma^*\times \R$ which is orthogonal to the fibers for the metric $ds_\omega^2$
(cf. section~\eqref{sec:omegaconnection}). It follows quite easily that $\Psi$ is an isometric embedding.







\end{proof}

\begin{remark}
As an  immediate corollary of this is proposition, we obtain that the stationary flat singular spacetimes $(M, ds^2_\omega)$ are maximal; i.~e.~they
admit no proper isometric extensions.
\end{remark}

\section{Observers, particles and null geodesics}

In this section, we illustrate how our description of  stationary flat globally hyperbolic spacetimes with particles allows one to clarify their  causality properties and to derive general results
about the outcome of  measurements by observers.
These issues play an important role in the physical interpretation of the theory.
The question if spacetimes containing particles with spin  are admissible models in (2+1)-dimensional general relativity or if
the presence of closed causal  curves makes them unsuitable has been subject to much debate in the physics community.

Generally, the presence of closed timelike curves (CTCs) in a spacetime is problematic from the physics point of view because it leads to  ``grandfather paradoxes''.
Each  point in the spacetime corresponds to a physical event, and a timelike
curve  defines the set of  the events experienced by an observer.  Given two points  $p,q$ on a {\em closed} timelike curve, it becomes impossible to determine which of the associated events happened to the observer before the other one, since - by definition -
each of them lies in the future and in the past of the other  one.

In the models under consideration, the CTCs are confined to a small region around each particle. Although this region is not inaccessible or hidden behind a horizon, one is tempted to argue
that the presence of CTCs in a small region around the particles is unproblematic if one intends to model the large scale behaviour of the spacetime and is interested only in observers that are located at a sufficient distance from the particles.

However,  this reasoning is too naive because it clashes with one of the fundamental notions of general relativity, namely the principle that observers can communicate with each other by  exchanging light signals. Such light signals are modelled by future directed null geodesics, which can enter the  regions in which closed timelike curves occur and remerge from them, even if the associated observers are located at a large distance from such regions. This can lead to light signals which are received by observers before they are emitted and again give rise to causality paradoxes.

In order to establish if flat globally hyperbolic spacetimes with particles are admissible physics models, it  is therefore necessary to analyse carefully and in detail how the presence of particles with spin affects light signals passed between different observers.
However, to our knowledge there is no systematic investigation of this issue in the literature.
In the following, we will show how this question can be resolved  for the stationary flat globally hyperbolic spacetimes with particles.

\subsection{Null geodesics in  the one-particle model}

To illustrate how the presence of particles with spin manifests itself in the measurements of observers,  we start with an informal discussion based on the
one-particle model introduced in Section \ref{singlepart}.
As in Section \ref{singlepart}, we denote by $\theta_0=2\pi-\mu$ the associated apex angle, by $\sigma$ the spin of the particle, and assume that the  singular line associated with the particle is given in radial coordinates by the equation $r=0$.

We consider observers who probe the geometry of the spacetime by emitting and receiving lightrays.  For simplicity, we will restrict attention to observers which do not undergo acceleration, and we will require that they do not collide with the particle.   Such observers are characterised uniquely by a worldline, a future directed timelike geodesic in the complement of the singular line.

A lightray is modelled  by  a future directed null geodesic in $M$. Note that we will not require that this null geodesic avoids the CTC region or the singular line, which in physical terms means that light can pass through the particle. Moreover, the lightrays under consideration are ``test lightrays''  in analogy to  the test masses  used in thought experiments on general relativity. This means we consider them as hypothetical lightrays in a given spacetime and neglect their contribution to the stress-energy tensor.

In the presence of particles, it  is possible that a lightray that is emitted by an observer at a given time returns to the observer.  Such a returning lightray corresponds to a future directed null geodesic $\gamma:[0,1]\rightarrow M$ which intersects the observer's worldline twice. As shown in Section \ref{sub:geodesic}, such  geodesics exist only for large masses $\mu=2\pi-\theta_0>\pi$ which correspond to cones with apex angles $\theta_0<\pi$. We therefore restrict attention to this case.

To obtain an explicit description of  returning lightrays, we need a concrete parametrisation for the future directed timelike geodesic which characterises the observer.
Making use of the symmetry of the one-particle system under rotations around the $t$-axis, we assume that in rectangular coordinates $(x,y,t)$ on $\R^{2,1}$ this  worldline takes the form
$$
g(t)=(0,d,0)+t(\sinh\vartheta, 0 ,\cosh\vartheta)\qquad \vartheta,d\in \R.
$$
Note that the  parameter $d$ gives the minimal distance between the particle and observer as it appears in the reference frame of the particle.   The parameter $\vartheta$ defines the relative velocity $v$ of observer relative to the particle: $v=\tanh\vartheta$. The parameter $t$ coincides with the eigentime of the observer, i.~e.~ the time shown on a clock carried by the observer. Note that the origin of this time variable is chosen in such a way  that the distance between the observer and the particle is minimal at $t=0$.

In the universal cover, returning lightrays are characterised by future directed null geodesic segments whose endpoints lie on two different lifts of the observer's worldline as indicated
The lifts of the observer's worldline to the universal cover can be parametrised as
$$
g_m(t)=R^m g(t)+m\sigma(0,0,1),
$$
where $m\in\Z$ and $R$ is the rotation around the $t$-axis by an angle $\theta_0$. It follows from the discussion in Section \ref{sub:geodesic} that
a null geodesic connecting two different lifts  $g_m$ and $g_n$ exists if and only if $0<|m-n|<\left|\frac \pi {\theta_0}\right|$.
A light signal emitted by the observer at eigentime $t-\Delta t$ and received at eigentime $t$
corresponds thus to a future directed null geodesic  from $g_m(t-\Delta t)$ to  $g_0(t)$ with $0<|m|<\left|\frac \pi {\theta_0}\right|$. Such a returning light ray  is depicted in Figure \ref{wedgecover}.

 \begin{figure}
  \includegraphics[scale=0.4]{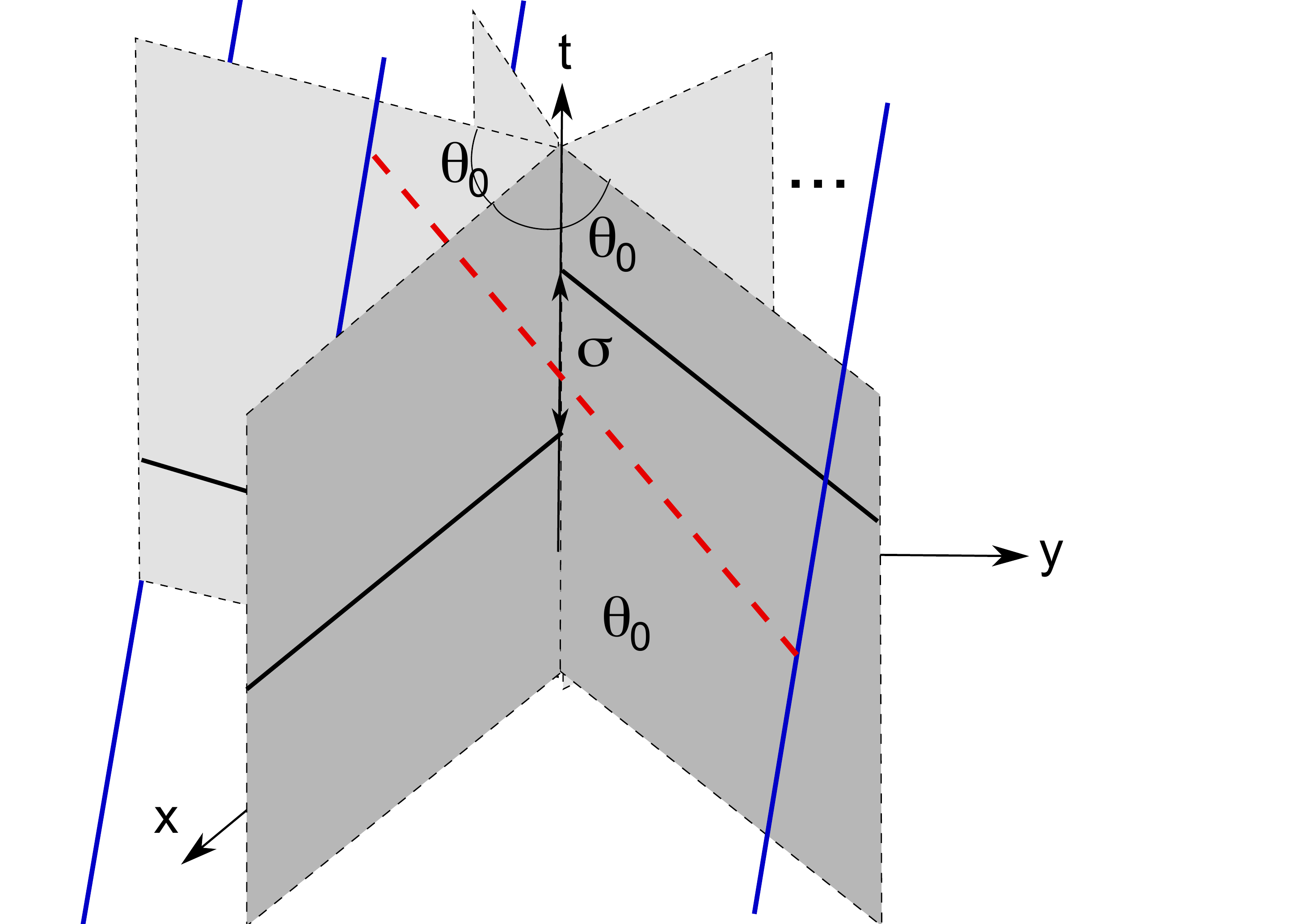}
\centering  \caption{Returning light ray in the universal cover. The blue solid  lines correspond to different lifts of the observer's worldline, the red dashed line depicts a returning light signal.}
  \label{wedgecover}
\end{figure}

 The  requirement that  $g(t)-g_m(t-\Delta t)$ is lightlike defines a quadratic equation in $\Delta t$.  A short calculation shows that the solution of this equation that  corresponds to a future-directed lightray is given by
\begin{align}\label{rett}
&\Delta t_m=m\sigma\cosh\vartheta+\sinh\vartheta T_m
+\sqrt{(m\sigma\sinh\vartheta+\cosh\vartheta T_m)^2+S_m^2}
\end{align}
where
$$
T_m=2\sin\tfrac{m\theta_0} 2\left(t\sinh\vartheta\sin\tfrac{m\theta_0} 2+d\cos\tfrac{m\theta_0} 2\right)\qquad S_m=2\sin\tfrac{m\theta_0} 2\left(t\sinh\vartheta\cos\tfrac{m\theta_0} 2-d\sin\tfrac{m\theta_0} 2\right).
$$
This defines the return time of the signal, the interval $\Delta t_m$ of eigentime elapsed between the emission and the reception if the lightray. Note, however, that depending on the sign of the spin and on the direction into which the lightray travels around the particle, the first and second term in this formula can become negative.  A short calculation shows that the return time $\Delta t_m>0$ if and only if
\begin{align}\label{cond}
\sqrt{t^2\sinh^2\vartheta+d^2}> \left|\frac{m\sigma}{2\sin\tfrac{m\theta_0}{2}}\right|.
\end{align}
From the concavity of the sine function it follows that this condition is satisfied for all admissible  values of $m$ if
\begin{align}\label{globcond}
\sqrt{t^2\sinh^2\vartheta+d^2}> \frac{\pi |r_0|}{2}
\end{align}
Conditions \eqref{cond}, \eqref{globcond} have a direct physical interpretation. The term $\sqrt{t^2\sinh^2\vartheta+d^2}$ on the left-hand-side gives the distance of the observer from the particle  at the  moment at which the observer receives the returning lightray and with respect to the momentum rest frame of the particle. If this distance is smaller than the term on the right-hand side, the effect of the spin can become dominant and results in a negative return time: $\Delta t_m<0$. This corresponds to a
light signal that is received before it is emitted thus violating causality. Note that causality violating signals  can arise even if the observer does not enter
 the CTC zone associated with the particle.  Similarly,  it is irrelevant if  the observer had entered or will enter the CTC zone at an earlier or later time. What determines if causality violating light signals can be received at a given time is the distance of the observer from the particle at {\em that time}.

In order to exclude causality violating light signals for all times $t$ and values of $m$, one therefore needs to impose that the minimum distance $|d|$ of the observer from the particle viewed from the reference frame of the particle satisfies the condition
\begin{align}\label{minc}
|d|> \frac {\pi |r_0|} 2=:r_c.
\end{align}
Note that this condition represents a considerable weakening with respect to the condition used in the definition of global hyperbolicity.
In Definition~\ref{def:spinGH} only causal curves which {\em do not} enter the CTC zones around the particles are admissible. In this example, we consider a closed, piecewise geodesic causal curve (obtained by composing the light ray with the segment of the timelike geodesic which characterises the observer). This curve is such that the the closest point of the timelike segment lies
 outside of a circle with radius $r_c$ around the particle but whose lightlike segment may enter the CTC zone.

It is instructive to consider how measurements with returning light signals allow the observer to obtain information about the spacetime, i.~e.~ to determine the position, mass and spin of the particle.
For simplicity we restrict attention to observers which are at rest with respect to the particle. Such observers are characterised by the condition $\vartheta=0$. For such an observer, determining the return time reduces to a two dimensional problem that can be solved by elementary geometry.
The return time \eqref{rett} takes the form
\begin{align}\label{ret2}
\Delta t_m=m\sigma+2d\left |\sin\tfrac{m\theta_0} 2 \right| =2m+l,
\end{align}
where  $l$ is   the length of the straight line in  the $t=0$-plane that connects its intersection point with the observer's worldline and and its image under the rotation $R$ as shown in Figure \ref{timepic}.
 \begin{figure}
  \includegraphics[scale=0.4]{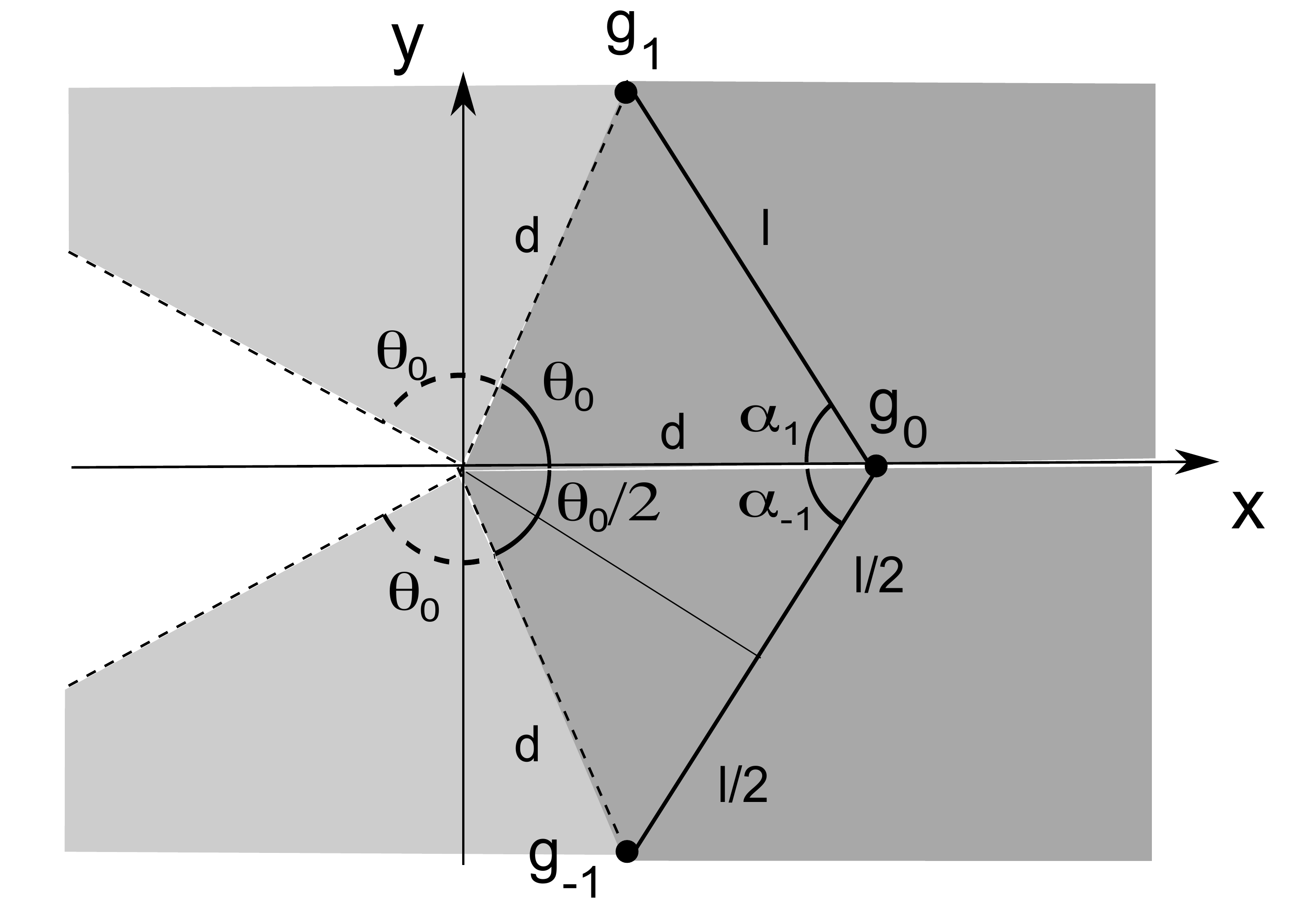}
\centering  \caption{Different lifts of the  observer and the projections of returning null geodesics. The points $g_0,g_1,g_{-1}$ correspond to the intersection points of the lifted worldlines with the plane characterised by $t=0$. The solid lines correspond to the projections of returning lightrays.}
  \label{timepic}
\end{figure}
The directions from which the observer receives the returning lightrays are given by the vertical projection of the lightlike vector $g(t)-g_m(t-\Delta t_m)$ on the plane $t=0$-plane.
The angle $\alpha_m$ between the direction of the particle as viewed by the observer and the direction of the returning lightray is therefore given  by
\begin{align}\label{helpid}
\alpha_m= \frac {\text{sgn}(m) (\pi - |m|{\theta_0})} 2
\end{align}
This allows the observer to draw conclusions about the location of the particle and to determine its mass $\mu=2\pi-\theta_0$ and spin $\sigma$. For this it is sufficient that the observer determines the direction and the return time for the first two lightrays which are given by $m=\pm 1$.
The observer then obtains the direction of the particle  by constructing the bisector of the angle between the directions of these two  returning lightrays. A measurement of  this angle  allows him to determine the particle's mass via \eqref{helpid}. The
 time elapsed between the return of the two lightrays yields the particle's spin $\sigma$ via \eqref{ret2}, and by taking the sum $\Delta t_1+\Delta t_{-1}=4d\cos\alpha_1$, the observer obtains his distance $d$ from the particle.
By emitting and receiving returning  lightrays, the observer can thus determine all parameters associated with the model: the position of the particle, its mass $\mu=2\pi-\theta_0$ and its spin $\sigma$.

\subsection{Null geodesics in stationary flat spacetimes with particles}

We will now generalise our discussion from the previous subsection to general stationary flat globally hyperbolic spacetimes with particles
 and to more general light signals emitted and received by several observers.
Let $(M, ds^2_\omega)$ be a stationary flat spacetime with particles as in Definition \ref{omspt}, which is globally hyperbolic, i.~e.~satisfies the conditions of Proposition~\ref{pro:ghspin}.

The discussion from Section \ref{statsec} shows that there are no closed causal curves contained in the interior region. However,  by definition, there is a region around each particle which the metric takes the same form as the one-particle model. If the apex angles associated with the particles are sufficiently small, this allows for
 the existence of light signals that enter the CTC region and that return to the observer before they are emitted. In analogy to the one-particle model, one can show that such signals are not present if the observer is sufficiently far from each particle.

However, this does not address the issue of causality violating signals in full generality.
In general relativity, it is  traditional to consider light signals that are passed back and forth between several different observers.
 Such light signals play a  fundamental  role in the physical interpretation of the theory since they allow observers to synchronise clocks and to measure distances and relative velocities.

To demonstrate that  these spacetimes have acceptable causality properties, we therefore need to establish physically reasonable conditions that ensure that measurements conducted by a ``team of observers''  located in the interior region of the spacetime do not result in light signals that return to an observer before they are emitted.
As we will show in the following, this is guaranteed if one imposes that all of the observers are located sufficiently far away from each particle.

To derive this result, we  give a precise definition  of the relevant  physical  concepts of observers, lightrays and light signals. For simplicity, we will restrict attention to observers whose worldlines are parallel to the singular lines that define the particles.

\begin{defi} [Observers and light signals] \label{obs}$\quad$\\
Let $(M, ds^2_\omega)$ be a stationary globally hyperbolic  flat spacetime  with particles, and denote by $M^*$ its regular part, i.~e.~ the complement of its singular lines.

\begin{enumerate}

\item A {\em stationary observer}  in $M$ is a future directed  vertical geodesic in $M^*$.
\item  A light ray sent from a stationary
 observer $g_1$ to a stationary  observer $g_2$ in $M$ is a future directed null geodesic  $\gamma:[0,1]\rightarrow M$ with $\gamma(0)\in g_1$ and $\gamma(1)\in g_2$.
 \item A {\em light signal} sent from a stationary  observer $g_1$ to a stationary observer $g_2$ in $M$ is a piecewise geodesic curve $\gamma:[0,1]\rightarrow M$ with $\gamma(0)\in g_1$ and $\gamma(1)\in g_2$ for which there exists a subdivision
$[0,1]=\bigcup_{j=0}^N [s_j,s_{j+1}]$, $s_0=0$, $s_{N+1}=1$ such that $\gamma|_{[s_j,s_{j+1}]}$ is a future directed null geodesic.
\end{enumerate}
\end{defi}
By identifying $M$ with $\Sigma \times\R$,  one can express every light signal  $\gamma:[0,1]\rightarrow M$ from $g_1$ to $g_2$ as a function $\gamma(s) = (c(s), t(s))$ where $c: [0,1] \to \Sigma$ is a  piecewise geodesic curve on $\Sigma$ with $c(0)=g_1\cap \Sigma$ and $c(1)=g_2\cap\Sigma$. Note that  the function $t:[0,1]\rightarrow \R$ is characterised uniquely up to a global constant by the requirement that $\gamma$  can be subdivided into future directed null geodesics.

From the definition, it is clear that the concept of a light signal encompasses precisely the situation discussed above. Each  point $c(s_j)$ on $\Sigma$ for $j\in\{0,...,N+1\}$ corresponds to an observer,  which is given by the unique vertical line through $c(s_j)$. The first observer at $c(0)$ emits a light ray at $t(0)=0$, that  is described by the null geodesic  $\gamma|_{[s_{0},s_{1}]}$. The second observer at $c(s_1)\in\Sigma$ receives this lightray at $t(s_1)$
and immediately emits another lightray, $\gamma|_{[s_1,s_2]}$, which is received by the third observer at $t(s_2)$ and so on, until the last lightray is received by the observer  at $t(1)$.  This situation is depicted in Figure \ref{observers}.
Piecewise geodesic curves on the surface $\Sigma$ thus have a natural general relativistic interpretation: they define groups of observers that transmit a signal by sending and receiving lightrays.

The question is now if by passing such light signals between different observers, it is possible to construct a light signal that returns to the first observer  and which is such that $t(1)<t(0)$. As we can identify $t(1)-t(0)$ with the time elapsed between the emission of the light signal and its reception as shown on a clock carried by this observer, the condition $t(1)<t(0)$ states that  the light signal is received before it is emitted, which is an obvious violation of causality.

\begin{defi} Let $(M, ds^2_\omega)$ be a stationary globally hyperbolic  flat spacetime  with particles. A {\em returning light signal} for a stationary observer $g$ is a light signal from $g$ to $g$. It can be expressed as a curve $\gamma:[0,1]\rightarrow M$, $\gamma(s)=(c(s),t(s))$, where $c: [0,1] \to \Sigma$ is a closed piecewise geodesic curve with $c(0)=c(1)$.
The light signal  is called  a {\em paradoxical} if $t(1)<t(0)$.
\end{defi}

 \begin{figure}
  \includegraphics[scale=0.4]{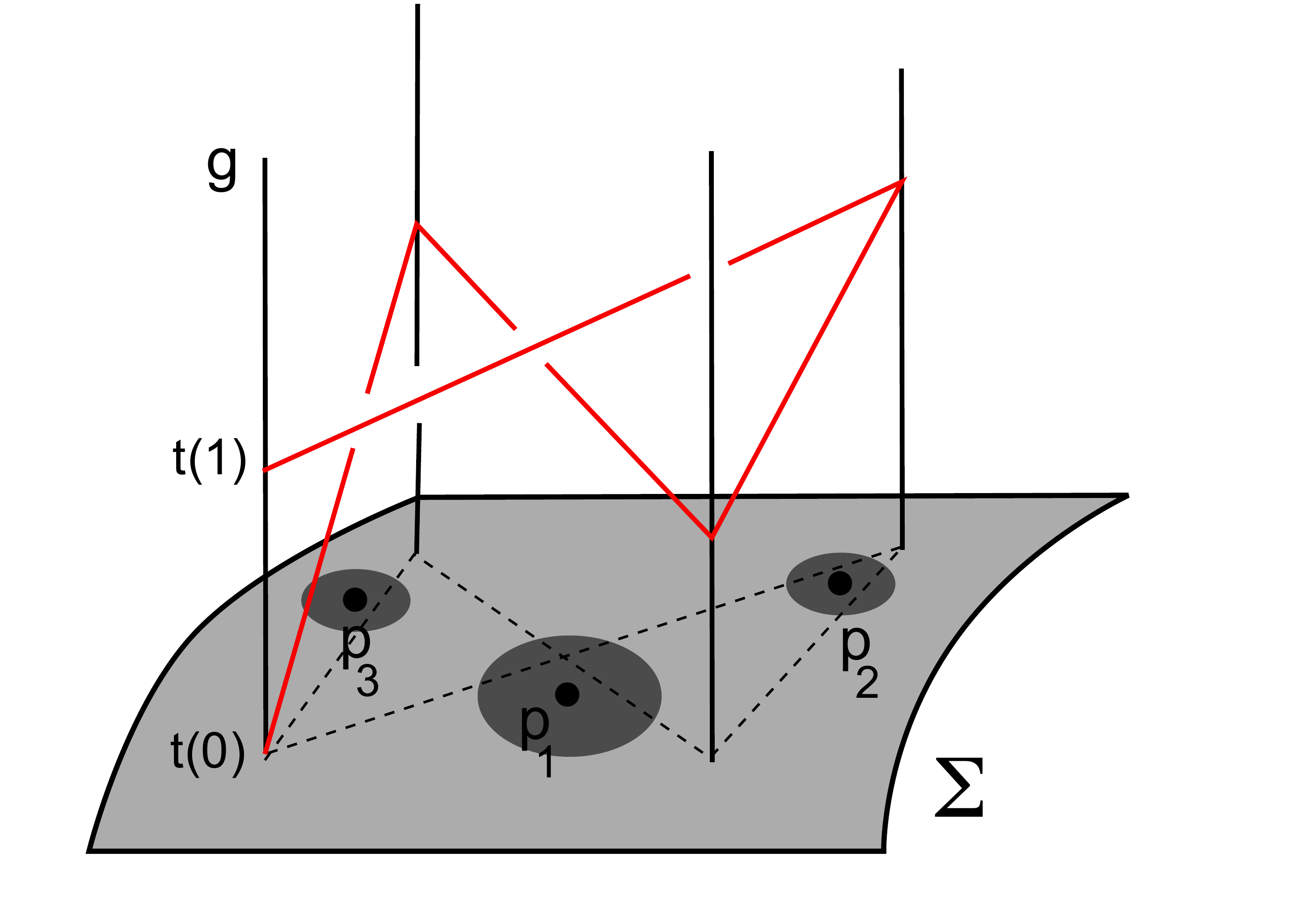}
\centering  \caption{Returning light signal in a stationary flat spacetime $(M,ds^2_\omega)$. The black solid lines depict the worldlines of stationary observers in $M$, the red solid line corresponds to a returning light signal. The shaded ares around the points on $\Sigma$ correspond to the CTC regions around each particle, and the grey dashed line to the associated piecewise geodesic curve on $\Sigma$.}
  \label{observers}
\end{figure}

We will now show that there is  direct and intuitive  condition that rules out paradoxical light signals  and which depends only on the location of the observers.  In other words, as long as the observers do not come too close to the particles, there is no possibility of paradoxical light signals.
This implies that although causality is violated near the particles, observers which are located at a certain distance from them do not encounter signals that are received before they are emitted.

\begin{prop}\label{sevobs} Let $(M, ds^2_\omega)$ be a stationary globally hyperbolic spatially compact flat spacetime with particles defined in terms of a Euclidean metric $ds^2_0$
with cone singularities  and a closed $1$-form $\omega$ on a closed surface $\Sigma$. For every singular line  $d_i$, denote by $r^i_c:=\frac\pi 2 |r_i|=\pi |\sigma_i|/2\theta_i$ 
 the CTC radius rescaled by $\pi/2$. Let $\gamma: [0,1]\rightarrow M$, $\gamma(s)=(c(s), t(s))$ be 
a returning light signal given in terms of a closed piecewise geodesic curve $c: [0,1] \to \Sigma$ with a subdivision as in Definition \ref{obs}.
Assume that $c$ is not constant and \begin{equation}\label{distance}
d_{ds_0^2}(c(s_j),p_i)>r^i_c \qquad \forall j\in\{0,...,N+1\}, i\in\{0,...,n\}.
\end{equation}
Then  $\gamma$ does not give rise to a  paradox, i.~e.~ it satisfies
 $t(1)-t(0)>0$.
\end{prop}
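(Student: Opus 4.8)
The plan is to translate the causality statement into a single metric inequality on the Euclidean surface and then to prove that inequality by a sharp angular estimate at each cone point. Along any future directed null piece of $\gamma$, the metric $ds^2_\omega=ds_0^2-(\omega+dt)^2$ gives $(\omega(\dot c)+\dot t)^2=ds_0^2(\dot c)$, and selecting the future cone ($\omega(\dot c)+\dot t>0$) yields $\dot t=\|\dot c\|_0-\omega(\dot c)$, where $\|\cdot\|_0$ is the norm of $ds_0^2$. This identity is additive over the geodesic pieces of the light signal, so integrating over all of $[0,1]$ gives
\[
t(1)-t(0)=\int_0^1\bigl(\|\dot c\|_0-\omega(\dot c)\bigr)\,ds=\ell_0(c)-\int_c\omega .
\]
Thus the Proposition is equivalent to $\int_c\omega<\ell_0(c)$ for the closed piecewise geodesic loop $c$. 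As $c$ is closed, $\int_c\omega$ depends only on the cohomology class $[\omega]$, so I may replace $\omega$ by any cohomologous representative when estimating it.

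The engine of the proof is a length-versus-angle estimate on a single geodesic edge, generalising the one-particle computation \eqref{ret2}--\eqref{globcond}. Developing an edge $c_j$ into the plane with a cone point $p_i$ at the origin, the pure spin form $\sigma_i\omega_i=r_i\,d\phi_i$ (with $r_i=\sigma_i/\theta_i$ and $\phi_i$ the geometric polar angle) integrates along $c_j$ to $r_i\,\Delta\phi$, where $\Delta\phi$ is the geometric angle swept around $p_i$. Splitting the edge at its foot of perpendicular to $p_i$, each half has Euclidean length $\sqrt{R^2-\rho^2}$ and sweeps angle $\arccos(\rho/R)$, where $R$ is the distance from the corresponding vertex $c(s_j)$ to $p_i$ and $\rho\le R$ is the closest approach. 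Since $u\mapsto\arccos(u)/\sqrt{1-u^2}$ is decreasing on $[0,1)$ with maximal value $\pi/2$ at $u=0$, and since \eqref{distance} gives $R>r^i_c=\tfrac{\pi}{2}|r_i|$, I obtain $\sqrt{R^2-\rho^2}\ge|r_i|\arccos(\rho/R)$: the Euclidean length of each edge dominates $|r_i|$ times the angle the edge sweeps around $p_i$. This is exactly the concavity bound $\sin x\ge\tfrac{2}{\pi}x$ on $[0,\pi/2]$ that produced \eqref{globcond}, now written through $\arccos$, and the threshold $r^i_c$ is precisely the borderline distance.

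To assemble these local estimates I would work with a preferred representative. Using the construction preceding Definition \ref{omspt} together with Corollary \ref{cor:omega1}, I would choose $\omega_0$ cohomologous to $\omega$ that equals the pure spin form $r_i\,d\phi_i$ near each $p_i$ and has operator norm $<1$ on $S_0$ (which contains everything at distance $>|r_i|$ from the cone points). Writing $\ell_0(c)-\int_c\omega=\ell_0(c)-\int_c\omega_0=\sum_j\bigl(\ell_0(c_j)-\int_{c_j}\omega_0\bigr)$ and grouping, along the part of each edge lying in $S_0$ the integrand $\|\dot c\|_0-\omega_0(\dot c)$ is nonnegative because $\|\omega_0\|_{\mathrm{op}}<1$ there, while along the part meeting the discs around the particles the deficit $\ell_0-r_i\Delta\phi$ is controlled by the edgewise estimate of the previous paragraph. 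Since $c$ is nonconstant it spends positive length in $S_0$, which forces the strict inequality $\int_c\omega<\ell_0(c)$ and hence $t(1)-t(0)>0$.

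The step I expect to be the main obstacle is the bookkeeping that glues the local spin estimates to the global small-norm representative. The disjointness (condition (2)) and embeddedness (condition (1)) provided by global hyperbolicity only hold for the CTC discs of radius $|r_i|$, whereas the sharp angular estimate naturally lives on the larger discs of radius $r^i_c=\tfrac{\pi}{2}|r_i|$, which may overlap or fail to embed; moreover the deficit over the annulus $|r_i|<d(\cdot,p_i)<r^i_c$ must be borrowed from the $S_0$ part rather than estimated sub-arc by sub-arc, since the pure spin form has operator norm exceeding one inside the CTC disc. I would therefore apply the angular estimate to entire edges (whose endpoints lie at distance $>r^i_c$ from every $p_i$ by \eqref{distance}) rather than to sub-arcs cut out by the discs, and verify, using the monotonicity of the angle to $p_i$ along a straight edge, that the Euclidean lengths charged to distinct particles are not double counted when an edge passes close to several of them.
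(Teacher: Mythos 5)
Your reduction to the inequality $\int_c\omega<\ell_0(c)$ via $\dot t=\|\dot c\|_0-\omega(\dot c)$ is exactly the paper's Remark~\ref{rem}, and your angular estimate $\arccos(u)\le\tfrac{\pi}{2}\sqrt{1-u^2}$ is the same trigonometric threshold the paper uses (in the form $\alpha<\pi\sin(\alpha/2)$ for $0<\alpha<\pi$). The gap is in the assembly, and the difficulty you flag at the end is real: your plan double-counts length. If you charge the \emph{whole} edge length $\ell_0(c_j)$ against the total angle $|r_i|\,\Delta\phi_i(c_j)$ swept around $p_i$, then the portion of $c_j$ lying outside the disc around $p_i$ has been used up and is no longer available either to dominate $\omega_0$ there (where you also need $\int\omega_0\le\ell_0$) or to control the angle swept around a second particle. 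What you actually need is the \emph{local} inequality $\int_{c_j\cap D_i}\omega_0\le\ell_0(c_j\cap D_i)$ for each disc separately, together with the pointwise bound on the remainder; only these add up without interference.

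The missing idea is where to make the cut. You propose cutting at radius $|r_i|$ (the boundary of $S_0$) and then ``borrowing'' the deficit of the inner chord from the annulus $|r_i|<r<r^i_c$; that is precisely the bookkeeping you could not close. The paper instead cuts the loop at the circles of radius $r^i_c=\tfrac{\pi}{2}|r_i|$. A geodesic sub-arc with both endpoints on such a circle is a chord subtending an angle $\alpha\in(0,\pi)$ in the development, of length $2r^i_c\sin(\alpha/2)=\pi|r_i|\sin(\alpha/2)>|r_i|\alpha$, and $|r_i|\alpha$ is exactly the integral of the pure spin form over that sub-arc --- so the estimate is already strict at this radius with no borrowing (it would fail at radius $|r_i|$, where $2\sin(\alpha/2)<\alpha$). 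On the complement of these larger discs the representative $\omega_0$ from Corollary~\ref{cor:omega1} has operator norm $<1$, so the pointwise bound applies there; summing the two kinds of contributions over the induced subdivision yields \eqref{omegal}. (Your concern about the embeddedness and disjointness of the discs of radius $r^i_c$, as opposed to $|r_i|$, is legitimate; the paper's proof uses these larger discs without comment.)
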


To prove the proposition, we note that its content can easily be reformulated as a statement on
closed, piecewise geodesic curves on the Euclidean surface with cone singularities.
We have:

\begin{remark} \label{rem}
Proposition \eqref{sevobs} is equivalent to the following statement.  Let $c:[0,1]\rightarrow \Sigma$ be a closed curve for which there exists a subdivision
$[0,1]=\bigcup_{j=0}^N [s_j,s_{j+1}]$, $s_0=0$, $s_{N+1}=1$ such that $c|_{[s_j,s_{j+1}]}$ is a geodesic, $c(s_0)=c(s_n)$  and
which satisfies condition \eqref{distance}.
Then:
\begin{equation}\label{omegal}
l(c)>\int_{c} \omega
\end{equation}
Indeed, combining this inequality with the corresponding inequality involving the $1$-form $-\omega$ we obtain:
$$
l(c)>\left|\int_{c} \omega\right|
$$
\end{remark}
\noindent By means of this remark, we can now give a direct proof of Proposition \ref{sevobs}:
\begin{proof}[Proof of proposition \ref{sevobs}]
For every cone singularity $p_i$, let $D_i$ be the disk of radius $r^i_c$ centred at $p_i$. We can assume that  up to a coboundary,
$\omega$ coincides in each disc $D_i$ with $\frac{\sigma_i}{\theta_i}d\theta=r_id\theta$, where $\theta$ is the angular coordinate
in the wedge of angle $\theta_i$.

According to Corollary \ref{cor:omega1}, one can  adjust $\omega$, without perturbing
the previous property, so that the integration of $\omega$ along a curve contained in the interior region - \textit{a fortiori,}
outside the disks $D_i$ - cannot exceed the length of the curve.

Let $c:[0,1]\rightarrow \Sigma$ be a closed piecewise geodesic curve satisfying the assumptions of Remark \ref{rem}. Then
there is a subdivision of $[0,1]$ such that every interval $[a,b]$ of this subdivision is either contained in the complement of
the disks $D_i$, or in such a disk.

We have just seen that in the first case, we have the inequality:
\begin{equation}\label{omegall}
\int_{c|_{[a,b]}} \omega < l(c|_{[a,b]})
\end{equation}
Consider now the second case. In this case,
the endpoints $c(a)$ and $c(b)$ lie on the boundary $\partial D_i$.  As the corner points $c(s_i)$ lie outside $D_i$, the restriction
of $c$ to $[a,b]$ is a geodesic arc. Consequently, there is an integer $m$ such that the lift $\tilde{c}$ of $c|_{[a,b]}$ to the $m$-branched cover $D^m_i$ of $D_i$
 is a minimising geodesic arc. Choose a polar coordinate system $(r,\theta)$ in $D_i$ such that  $\theta(\tilde c(a))=0$
and let $\alpha$ be the angular coordinate of $\tilde{c}(b)$. According to Section~\ref{sub:geodesic}, we have $0 < \alpha < \pi$.
According to our choice of $\omega$, the lift $\tilde{\omega}$ of $\omega$ in $D^m_i$ is the variation of the angular coordinate multiplied by
the factor ${r_i}/{2\pi}$. This implies
$$
\int_{c|_{[a,b]}} \omega = \int_{\tilde{c}|_{[a,b]}} \tilde{\omega} = r_i\alpha
$$
As $\tilde{c}|_{[a,b]}$ is a chord of angle $\alpha$ of a circle of radius $r^i_c$, its length is given by
$$
l(c|_{[a,b]}) = l(\tilde{c}|_{[a,b]}) = 2r^i_c\sin\frac\alpha2={|r_i|}\pi\sin\frac\alpha2
$$
As $\frac\alpha2<\frac\pi2$, we have $\alpha < \pi\sin\frac\alpha2$. It follows that the inequality \eqref{omegall} also holds
in this case.
This implies that on each subinterval $[a,b]$ of the subdivision of $[0,1]$, the inequality \eqref{omegall} is satisfied. By summing over all subintervals, we obtain the
inequality \eqref{omegal}.
\end{proof}

\section{Outlook and conclusions}

In this article, we give a systematic investigation of the causality structure of  flat, stationary (2+1)-dimensional Lorentzian manifolds with particle singularities and clarify its implications in physics.  As these manifolds contain closed timelike curves, the usual methods established  for globally hyperbolic manifolds cannot be applied directly
but have to be replaced by suitable generalisations.

By introducing a generalised notion of global hyperbolicity adapted to manifolds with particle singularities,  we are able to classify all stationary flat Lorentzian (2+1)-dimensional manifolds with particles which are globally hyperbolic in that sense.
This classification result  characterises flat, stationary globally hyperbolic (2+1)-dimensional Lorentzian manifolds in terms of two-dimensional Euclidean surfaces with cone singularities and closed one-forms on these surfaces and thus provides an explicit and simple description.

It turns out that this description is particularly well-suited for the investigation of the causality structure of these manifolds from a physics  point of view.  It allows one to systematically
address the the question how the presence of massive point particles with spin manifests itself in measurements performed by observers in the spacetime.

We show how an observer in the spacetime can use the results of measurements with returning lightrays to determine the mass, spin, position and relative velocity of the particles and investigate more general light signals exchanged between several observers. It turns out that the latter have a natural interpretation in terms of piecewise geodesic loops on the underlying surface with conical singularities.

This allows us to derive a general condition on the observer that excludes paradoxical light signals which return to an observer before they are omitted.  In physics terms, our result implies that if all observers stay sufficiently far away from the particles, no causality violating light signals will occur, no matter how often the light signals exchanged between them enter spacetime regions which contain closed timelike curves.

It would be interesting to extend these results to more complete  classification of flat, globally hyperbolic  3d Lorentzian manifolds with particle singularities, which also take into account the non-stationary case.  As currently very little is known about these manifolds, a first step would be the construction and study  of relevant examples which generalise the examples currently known in the physics literature.  One could then attempt to classify these manifolds under suitable additional assumptions using the generalised notion of global hyperbolicity introduced in this paper.

\bibliography{bibliospin}
\bibliographystyle{alpha}

\end{document}